\numberwithin{equation}{section}
\theoremstyle{plain}
\newtheorem{thm}{Theorem}[section]
\newtheorem*{thm*}{Theorem}
\newtheorem{cor}[thm]{Corollary}
\newtheorem*{cor*}{Corollary}
\newtheorem{lem}[thm]{Lemma}
\theoremstyle{definition}
\newtheorem{defn}[thm]{Definition}
\newtheorem{exm}[thm]{Example}
\theoremstyle{remark}
\newtheorem{rmk}[thm]{\bf Remark}
\renewcommand{\Im}{\mathrm{Im}}
\begin{document}
\title{N-recollements and Virtually Gorenstein Algebras}

\author{Dawei Shen}
\address{Dawei Shen\\
         School of Mathematics and Statistics\\
         Henan University\\
         Kaifeng \\
         Henan 475004 \\
         China}
\email{sdw12345@mail.ustc.edu.cn}

\author{Hao Su*}
\address{Hao Su\\
         School of Science\\
         Tianjin Chengjian University\\
         Tianjin 300384 \\
         China}
\email{suhaomt@tcu.edu.cn}

\thanks{*Corresponding author}
\subjclass[2010]{18E30, 16E10}
\keywords{recollement, virtually Gorenstein algebra}
\date{\today}

\begin{abstract}
    The relation between the $n$-recollements of stable categories of Gorenstein projective modules and the virtual Gorensteinness of algebras are investigated. Let $A,B$, and $C$ be finite dimensional algebras.  We prove that if the stable category of Gorenstein projective $A$-modules admits a $n$-recollement relative to that of $B$ and $C$ with $n\geq 2$, then $A$ is virtually Gorenstein if and only if so are $B$ and $C$.
\end{abstract}

\maketitle

\section{Introduction}

   Recollements of triangulated categories were introduced by Beilinson, Bernstein, and Deligne \cite{BEILINSON1988317} on perverse sheaves and play an important role in algebraic geometry and representation theory now. Roughly speaking, a recollement allows us to decompose a triangulated category into two relatively smaller ones. This technique is very useful in the study of derived categories, especially in reducing homological conjectures and computing homological invariants.

   The concept of Gorenstein projective module goes back to the work of Auslander and Bridger \cite{Auslander1969}. Since then, they are central in relative homological algebra. Virtually Gorenstein algebras introduced in \cite{Beligiannis2005} are a natural generalization of Gorenstein algebras and share many properties with genuine Gorenstein algebras. The virtual Gorensteinness provides a useful tool for the study of the Gorenstein Symmetry Conjecture.

   In \cite{Qin2016}, Qin and Han introduced the concept of $n$-recollement of triangulated categories and clarified the relation between  $n$-recollements of derived categories of algebras and the Gorensteinness of algebras. Inspired by their work, we investigate the relation between the $n$-recollements of stable categories of Gorenstein projective modules and the virtual Gorensteinness of algebras. Let $\mathcal{A}$ be an abelian category with enough projective objects. We denote by ${\rm{GP}}(\mathcal{A})$ the full subcategory of $\mathcal{A}$ which is formed by all Gorenstein projective objects and  $\underline{{\rm{GP}}}(\mathcal{A})$ the stable category of ${\rm{GP}}(\mathcal{A})$.

   Our main result is as follows.

   \begin{thm*}[I]
        Let $A,B$ and $C$ be finite dimensional algebras, and $\underline{\rm{GP}}(A)$ admits an $n$-recollement relative to $\underline{\rm{GP}}(B)$ and $\underline{\rm{GP}}(C)$ which restricts to finite dimensional modules.
        Then
        \begin{enumerate}
          \item $n=1$: if $A$ is virtually Gorenstein then so are $B$ and $C$;
          \item $n\geq 2$: $A$ is virtually Gorenstein if and only if so are $B$ and $C$.
        \end{enumerate}
   \end{thm*}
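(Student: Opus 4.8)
The plan is to reformulate virtual Gorensteinness as compact generation of the ``big'' stable categories and then read it off from the ladder. Recall the characterization (due to Beligiannis): a finite dimensional algebra $\Lambda$ is virtually Gorenstein if and only if the triangulated category $\underline{\rm{GP}}(\mathrm{Mod}\,\Lambda)$, built from all Gorenstein projective modules, is compactly generated, and in that case its subcategory of compact objects is $\underline{\rm{GP}}(\mod\Lambda)$; moreover $\underline{\rm{GP}}(\mod\Lambda)$ always consists of compact objects of $\underline{\rm{GP}}(\mathrm{Mod}\,\Lambda)$ and is thick there. Write $R=\underline{\rm{GP}}(\mathrm{Mod}\,A)$, $\mathcal{T}_B=\underline{\rm{GP}}(\mathrm{Mod}\,B)$, $\mathcal{T}_C=\underline{\rm{GP}}(\mathrm{Mod}\,C)$, so the hypothesis provides an $n$-recollement $(\mathcal{T}_B,R,\mathcal{T}_C)$ with functors $i^*,i_*,i^!,j_!,j^*,j_*$ restricting to an $n$-recollement $(\underline{\rm{GP}}(\mod B),\underline{\rm{GP}}(\mod A),\underline{\rm{GP}}(\mod C))$; note that, since $n\ge 1$, the functors $i_*$ and $j_!$ are fully faithful and preserve coproducts, being left adjoints of $i^!$ and $j^*$ respectively.

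First I would prove the implication ``$A$ virtually Gorenstein $\Rightarrow$ $B$ and $C$ virtually Gorenstein'', which I claim holds for \emph{every} $n\ge 1$ and therefore yields case $(1)$ as well as the ``only if'' half of case $(2)$. Assume $R$ is compactly generated with $R^{c}=\underline{\rm{GP}}(\mod A)$. Because $i_*$ is fully faithful and preserves coproducts, and the restricted recollement sends $\underline{\rm{GP}}(\mod B)$ into $\underline{\rm{GP}}(\mod A)=R^{c}$ under $i_*$, every $G\in\underline{\rm{GP}}(\mod B)$ is compact in $\mathcal{T}_B$; thus $\underline{\rm{GP}}(\mod B)\subseteq\mathcal{T}_B^{\,c}$. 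For generation, let $Y\in\mathcal{T}_B$ be right orthogonal to $\underline{\rm{GP}}(\mod B)$ in all degrees. For every $X\in R^{c}=\underline{\rm{GP}}(\mod A)$ the restricted recollement gives $i^*X\in\underline{\rm{GP}}(\mod B)$, so $\Hom_R(X[k],i_*Y)\cong\Hom_{\mathcal{T}_B}(i^*X[k],Y)=0$ for all $k\in\Zahlen$; since $R^{c}$ generates $R$ this forces $i_*Y=0$, hence $Y=0$. So $\mathcal{T}_B$ is compactly generated and $B$ is virtually Gorenstein. Replacing $(i^*,i_*)$ by $(j^*,j_*)$ and using the adjunctions $\Hom_{\mathcal{T}_C}(j^*X,Y)\cong\Hom_R(X,j_*Y)$ and $\Hom_R(j_!G',Z)\cong\Hom_{\mathcal{T}_C}(G',j^*Z)$ yields the same conclusion for $C$.

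Next I would prove the converse ``$B$ and $C$ virtually Gorenstein $\Rightarrow$ $A$ virtually Gorenstein'' under $n\ge 2$. Now $\mathcal{T}_B,\mathcal{T}_C$ are compactly generated with $\mathcal{T}_B^{\,c}=\underline{\rm{GP}}(\mod B)$ and $\mathcal{T}_C^{\,c}=\underline{\rm{GP}}(\mod C)$. The single point where $n\ge 2$ is used: the extended ladder of an $n$-recollement with $n\ge 2$ furnishes a right adjoint of $i^!$, so $i^!$ preserves coproducts and therefore $i_*$ preserves compact objects; and $j^*$ always preserves coproducts, so $j_!$ preserves compact objects. Hence, using also that the restricted recollement sends $\underline{\rm{GP}}(\mod B)$ and $\underline{\rm{GP}}(\mod C)$ into $\underline{\rm{GP}}(\mod A)$ under $i_*$ and $j_!$, the set $S:=i_*\underline{\rm{GP}}(\mod B)\cup j_!\underline{\rm{GP}}(\mod C)$ consists of compact objects of $R$. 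It generates $R$: if $M\in R$ is right orthogonal to $S$ in all degrees, then $\Hom_{\mathcal{T}_B}(G[k],i^!M)\cong\Hom_R(i_*G[k],M)=0$ for all $G\in\underline{\rm{GP}}(\mod B)$, so $i^!M=0$; likewise $\Hom_{\mathcal{T}_C}(G'[k],j^*M)\cong\Hom_R(j_!G'[k],M)=0$ for all $G'\in\underline{\rm{GP}}(\mod C)$, so $j^*M=0$, which puts $M$ in $\Im i_*$, say $M\cong i_*N$; then $N\cong i^!i_*N\cong i^!M=0$ and $M=0$. Thus $R$ is compactly generated and $A$ is virtually Gorenstein.

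The main obstacle I anticipate is exactly the step that separates $(1)$ from $(2)$: one must verify from the precise definition of $n$-recollement that $n\ge 2$ equips $i^!$ with a right adjoint, equivalently that $i_*$ preserves compactness — this is the structural feature a bare recollement lacks, and it is solely responsible for the asymmetry. A secondary, bookkeeping-type point is to pin down, under the virtual Gorenstein hypothesis, that the compact objects of $\underline{\rm{GP}}(\mathrm{Mod}\,\Lambda)$ are precisely $\underline{\rm{GP}}(\mod\Lambda)$ and that $\underline{\rm{GP}}(\mod\Lambda)$ is a generating set there, together with the fact that the ``restricted'' recollement is the genuine termwise restriction of the big one to finite dimensional modules; these I would quote from the theory of virtually Gorenstein algebras (Beligiannis) and of $n$-recollements (Qin--Han) recalled at the outset.
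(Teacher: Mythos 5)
Your argument is sound in substance, but one of the inputs you quote is misstated, and the route you take is genuinely different from the paper's. The misstatement first: $\underline{\rm{GP}}({\rm{Mod}}\,\Lambda)$ is compactly generated for \emph{every} finite dimensional algebra $\Lambda$ (the paper records this, following Beligiannis), so ``virtually Gorenstein iff compactly generated'' cannot be the criterion. The form of Beligiannis's Theorem 8.2 that the paper uses is: $\Lambda$ is virtually Gorenstein iff $(\underline{\rm{GP}}({\rm{Mod}}\,\Lambda))^{c}=\underline{\rm{Gp}}(\Lambda)$; equivalently, since $\underline{\rm{Gp}}(\Lambda)$ is always a thick subcategory of the compacts, iff $\underline{\rm{Gp}}(\Lambda)$ compactly generates the big category. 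Fortunately your arguments establish exactly this sharper statement in every case --- you always exhibit generators lying inside the relevant small subcategory, and Neeman's theorem then identifies the compacts with its thick closure --- so the proof survives once the criterion is restated correctly. You should make that repair explicit, since as written the final sentences of both directions (``thus compactly generated, hence virtually Gorenstein'') appeal to the false version.

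As for the comparison: the paper argues object-by-object on compacts, whereas you argue by generation and orthogonality. In case (1) the paper handles the ``outer'' factor via $Z\cong j^{!}j_{!}Z$ with $j_{!}$ preserving compacts, and the ``inner'' factor via Neeman's theorem that $i^{*}$ restricted to compacts is dense up to direct summands; your orthogonality argument using the adjunctions $(i^{*},i_{*})$ and $(j^{*},j_{*})$ avoids the density lemma altogether. In case (2) the paper writes a compact $X$ as an extension in the triangle $j_{!}j^{!}X\to X\to i_{*}i^{*}X$, using $n\ge 2$ to make $j^{!}$ preserve compacts; you instead show that $i_{*}\underline{\rm{Gp}}\cup j_{!}\underline{\rm{Gp}}$ compactly generates, using $n\ge 2$ to make $i_{*}$ preserve compacts. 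Both uses of the extra row are legitimate and of equal strength, and your identification of $i_{*}$ preserving compactness as the single point where $n\ge 2$ enters matches the paper's mechanism exactly (up to which adjoint one chooses to exploit). The generation-based approach is slightly longer but more self-contained; the paper's is shorter because it can lean on the density statement from Neeman's localization theorem.
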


    We also study the relation between $n$-recollements of derived categories of algebras and the virtual Gorensteinness of algebras. We obtain the following result.

    \begin{thm*}[II]
        Let $A,B$ and $C$ be finite dimensional algebras, and  $\mathcal{D}(\mathrm{Mod}A)$ admits a $n$-recollement relative to  $\mathcal{D}(\mathrm{Mod} B)$ and $\mathcal{D}(\mathrm{Mod} C)$. Then
        \begin{enumerate}
          \item $n=5$: if $A$ is virtually Gorenstein then so are $B$ and $C$;
          \item $n\geq 6$: $A$ is virtually Gorenstein if and only if so are $B$ and $C$.
        \end{enumerate}
    \end{thm*}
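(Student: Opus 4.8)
The plan is to deduce Theorem II from Theorem I by passing from an $n$-recollement of derived categories to an $m$-recollement of the stable categories of Gorenstein projective modules, with $m = n - 4$, so that $n = 5$ corresponds to $m = 1$ and $n \geq 6$ to $m \geq 2$. The key ingredient is the singularity category: recall that for a finite dimensional algebra $A$ one has the Buchweitz--Happel type equivalence $\underline{\rm{GP}}(A) \to \mathbf{D}_{\rm sg}(A) = \mathbf{D}^{\rm b}(\mathrm{mod}A)/\mathbf{perf}(A)$ when $A$ is virtually Gorenstein on the relevant subcategory, but in general $\underline{\rm{GP}}(A)$ embeds as a full triangulated subcategory of $\mathbf{D}_{\rm sg}(A)$. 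So I would first establish that an $n$-recollement of $\mathcal{D}(\mathrm{Mod}A)$ relative to $\mathcal{D}(\mathrm{Mod}B)$ and $\mathcal{D}(\mathrm{Mod}C)$ restricts to a recollement of bounded derived categories $\mathbf{D}^{\rm b}(\mathrm{mod}-)$, and then descends to the singularity categories.

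The first step is the reduction to bounded derived categories. A recollement of unbounded derived categories of algebras need not restrict to $\mathbf{D}^{\rm b}(\mathrm{mod}-)$, but an $n$-recollement with $n$ large forces enough finiteness: the six functors in an $n$-recollement with $n \geq 2$ send compact objects to compact objects in both directions (this is where we lose two "levels"), hence the recollement restricts to $\mathbf{D}^{\rm b}(\mathrm{mod}A)$, $\mathbf{D}^{\rm b}(\mathrm{mod}B)$, $\mathbf{D}^{\rm b}(\mathrm{mod}C)$ and remains an $(n-2)$-recollement there. The second step is to pass from bounded derived categories to singularity categories: one quotients out the perfect complexes, which are preserved by all functors of the restricted recollement (again costing levels), producing an $(n-4)$-recollement of $\mathbf{D}_{\rm sg}(A)$ relative to $\mathbf{D}_{\rm sg}(B)$ and $\mathbf{D}_{\rm sg}(C)$. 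The third step identifies $\mathbf{D}_{\rm sg}$ with $\underline{\rm{GP}}$ on the subcategory of finite dimensional modules, using the standard fact that the natural functor $\underline{\rm{GP}}(A) \to \mathbf{D}_{\rm sg}(A)$ is fully faithful and that virtual Gorensteinness of $A$ is exactly the condition making it an equivalence; the point, however, is that the $(n-4)$-recollement we obtain restricts to finite dimensional modules in the sense of Theorem I regardless, so Theorem I applies directly with $m = n - 4$.

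The numerics then give the result: for $n = 5$ we get an $m = 1$-recollement of the $\underline{\rm{GP}}$'s restricting to finite dimensional modules, and Theorem I(1) yields that $A$ virtually Gorenstein implies $B$ and $C$ are; for $n \geq 6$ we get $m \geq 2$ and Theorem I(2) gives the equivalence. The main obstacle I anticipate is the bookkeeping of exactly how many levels are lost at each reduction — showing that compactness and perfectness are preserved by the appropriate functors requires that enough adjoints exist and behave well, and one has to verify carefully that the restriction operations do not break any of the triangle functor or adjunction properties, so that what remains really is an $(n-4)$-recollement and not merely a collection of adjoint triples. A secondary subtlety is checking the "restricts to finite dimensional modules" hypothesis of Theorem I: one must confirm that the functors of the singularity-category recollement send the images of $\mathrm{mod}B$ and $\mathrm{mod}C$ into the image of $\mathrm{mod}A$ and vice versa, which should follow from the corresponding property already holding at the level of $\mathbf{D}^{\rm b}(\mathrm{mod}-)$.
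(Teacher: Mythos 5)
Your overall route coincides with the paper's: reduce the derived-category $n$-recollement to bounded derived categories, then to singularity categories, then to the stable categories of Gorenstein projectives, and finally invoke Theorem I with $m=n-4$. However, there is a genuine gap at the decisive step, the passage from the singularity categories to $\underline{\rm{GP}}$. In general $\underline{\rm{GP}}(A)$ is only a \emph{proper} full triangulated subcategory of $\mathcal{D}^{b}(\mathrm{Mod}A)/\mathcal{K}^{b}(\mathrm{Proj}A)$, and there is no a priori reason why the six functors of the induced recollement of singularity categories should preserve these subcategories; your assertion that the recollement ``restricts \ldots regardless'' is exactly the point that needs proof. The paper fills this gap with two external inputs that your proposal does not supply: first, an adjoint pair in which both functors preserve $\mathcal{K}^{b}(\mathrm{proj})$ has its right adjoint nonnegative up to shift (Lemma 4.3, from Chen--Hu--Qin--Wang); second, a nonnegative functor whose right adjoint sends projectives into $\mathcal{K}^{b}(\mathrm{P})$ induces, compatibly with the embeddings into the singularity categories, a triangle functor $\underline{\rm{GP}}(A)\to\underline{\rm{GP}}(B)$ (Lemma 4.4, Hu--Pan). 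It is precisely the need for these extra adjoints on both sides that consumes the remaining levels of the ladder and produces the count $n-4$; without this mechanism your level bookkeeping is numerology rather than proof.

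Two further points. Your claim that virtual Gorensteinness of $A$ is ``exactly the condition'' for $\underline{\rm{GP}}(A)\to \mathcal{D}^{b}_{sg}$ to be an equivalence is false: that equivalence characterizes \emph{Gorenstein} algebras (Buchweitz--Happel), whereas virtual Gorensteinness is the equality $(\underline{\rm{GP}}(A))^{c}=\underline{\rm{Gp}}(A)$ (Lemma 3.4); and in any case you may not assume it for $B$ and $C$, whose virtual Gorensteinness is the conclusion. Finally, Theorem I requires a recollement of the \emph{big} stable categories $\underline{\rm{GP}}(\mathrm{Mod}-)$ that restricts to $\underline{\rm{Gp}}$, so the construction must be carried out twice in parallel --- once through $\mathcal{D}^{b}(\mathrm{Mod})$ and $\mathcal{K}^{b}(\mathrm{Proj})$, once through $\mathcal{D}^{b}(\mathrm{mod})$ and $\mathcal{K}^{b}(\mathrm{proj})$ --- together with the compatibility of the two; your sketch only works with $\mathbf{D}^{\rm b}(\mathrm{mod}-)$ and would not produce the categories to which Theorem I actually applies.
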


    As an application of Theorem ($\textrm{II}$), we reprove that virtually Gorensteinness is invariant under derived equivalence.

   The paper is organized as follows. In section 2 we will recall some basic facts of $n$-recollements. In Section 3, we study the relation between $n$-recollements of stable categories of Gorenstein projective modules and the virtual Gorensteinness of algebras and prove Theorem ($\textrm{I}$). In Section 4, we investigate the relation between $n$-recollements of derived categories of algebras and the virtual Gorensteinness of algebras and prove Theorem ($\textrm{II}$).

\section{$n$-recollements of triangulated categories}
    In this section, we review some of the standard facts of $n$-recollements of triangulated categories.

    \begin{defn}[\cite{BEILINSON1988317}]
      Let $\mathcal{D},\mathcal{D}'$ and $\mathcal{D}''$ be triangulated categories. A {\em recollement} of $\mathcal{D}$ relative to $\mathcal{D}'$ and $\mathcal{D}''$ is a diagram
      $$
         \xymatrix@C=6em{
                    \mathcal{D}'\ar@{->}[r]|-{i_{\ast}=i_{!} }& \mathcal{D} \ar@<2ex>[l]^{i^{!}} \ar@{->}[r]|-{j^{!}=j^{\ast}} \ar@<-2ex>[l]_{i^{\ast}} & \mathcal{D}''\ar@<2ex>[l]^{j_{\ast}}\ar@<-2ex>[l]_{j_{!}}\\
         }
      $$
      of triangulated categories and triangle functors such that
      \begin{enumerate}
        \item $(i^{\ast},i_{\ast}),(i_{!},i^{!}),(j_{!},j^{!})$ and $(j^{\ast},j_{\ast})$ are adjoint pairs;
        \item $i_{\ast},j_{!}$ and $j_{\ast}$ are full embeddings;
        \item $j^{!}i_{\ast}=0$ (and thus also $i^{!}j_{\ast}=0$ and $i^{\ast}j_{!}=0$);
        \item for each $X\in D$, there are triangles
        $$
            i_{!}i^{!}(X)\rightarrow X \rightarrow j_{\ast}j^{\ast}(X) \rightarrow i_{!}i^{!}(X)[1]
        $$
        $$
            j_{!}j^{!}(X)\rightarrow X \rightarrow i_{\ast}i^{\ast}(X) \rightarrow j_{!}j^{!}(X)[1]
        $$
      \end{enumerate}
      where the arrows are given by adjunctions.
    \end{defn}

    Given a triangulated category $\mathcal{T}$ with small coproducts, an object $X\in \mathcal{T}$ is {\em compact} if $\rm{Hom}_{\mathcal{T}}(X,-)$ preserves small coproducts. We denote by $\mathcal{T}^{c}$ the full subcategory of $\mathcal{T}$ consisting of all compact objects. In a recollement, the functors in the first row preserve compact objects.
    For a set $\mathcal{S}$ of objects of $\mathcal{T}$, we denote by $\mathrm{Loc}\, \mathcal{S}$ the smallest triangulated subcategory of $\mathcal{T}$ containing $\mathcal{S}$ and closed under taking coproducts. A triangulated category $\mathcal{T}$ with small coproducts is called {\em compactly generated} if there is a set  $\mathcal{S}$ of compact objects such that $\mathcal{T}=\mathrm{Loc}\,\mathcal{S}$.

    \begin{lem}[{\cite[Theorem 2.1]{ASENS_1992_4_25_5_547_0}}]\label{Neeman92}
    Let $\mathcal{D},\mathcal{D}'$ and $\mathcal{D}''$ be compactly generated and
            $$
                \xymatrix@C=3em{
                    \mathcal{D}'\ar@{->}[r]|-{i_{\ast}=i_{!} }& \mathcal{D} \ar@<2ex>[l]^{i^{!}} \ar@{->}[r]|-{j^{!}=j^{\ast}} \ar@<-2ex>[l]_{i^{\ast}} & \mathcal{D}''\ar@<2ex>[l]^{j_{\ast}}\ar@<-2ex>[l]_{j_{!}}\\
            }
            $$
        be a recollement of $\mathcal{D}$ relative to $\mathcal{D}'$ and $\mathcal{D}''$.
        Then the first row restricts to compact objects and the restriction of $i^*$ is dense up to direct summand.
    \end{lem}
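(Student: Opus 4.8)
The plan is to recognise the two middle functors of the recollement as Verdier localizations and to reduce the statement to Neeman's localization theorem for compactly generated triangulated categories; I would begin by settling the formal parts. In the recollement $i_{*}=i_{!}$ is a left adjoint (of $i^{!}$) and $j^{!}=j^{*}$ is a left adjoint (of $j_{*}$), hence both preserve small coproducts; since a triangle functor between triangulated categories with coproducts preserves compactness as soon as its right adjoint preserves coproducts, the left adjoints $i^{*}$ and $j_{!}$ then carry $\mathcal{D}^{c}$ into $(\mathcal{D}')^{c}$ and $(\mathcal{D}'')^{c}$ into $\mathcal{D}^{c}$. From the recollement triangles together with $j^{!}i_{*}=0$ one obtains $\Ker j^{!}=\Im i_{*}$ and $\Ker i^{*}=\Im j_{!}$; each is a localizing subcategory of $\mathcal{D}$ (thick and closed under coproducts, since $i_{*}$ and $j_{!}$ are fully faithful coproduct-preserving triangle functors), and $j^{!}$, $i^{*}$ realize $\mathcal{D}''$, $\mathcal{D}'$ as the Verdier quotients $\mathcal{D}/\Ker j^{!}$, $\mathcal{D}/\Ker i^{*}$, with sections $j_{*}$, $i_{*}$.

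Next I would dispose of the easy half of each conclusion. Choosing a set $\mathcal{G}''$ of compact generators of $\mathcal{D}''$, the set $j_{!}\mathcal{G}''\subseteq\mathcal{D}^{c}$ generates $\Ker i^{*}=\Im j_{!}$, because $j_{!}$ is a coproduct-preserving triangle equivalence onto its (localizing) image and sends compacts to compacts. Thus $\Ker i^{*}$ is generated by a set of compact objects of $\mathcal{D}$, and Neeman's theorem, applied to $i^{*}\colon\mathcal{D}\to\mathcal{D}/\Ker i^{*}\simeq\mathcal{D}'$, yields that $i^{*}$ preserves compactness and that $i^{*}\colon\mathcal{D}^{c}\to(\mathcal{D}')^{c}$ is dense up to direct summands. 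This already proves the second assertion, and it does not use the hypothesis that $\mathcal{D}'$ be compactly generated (that comes out as a consequence).

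The hard part will be to prove that $\Ker j^{!}=\Im i_{*}$ is \emph{also} generated by a set of compact objects of $\mathcal{D}$. In contrast to the previous case this is not a formal consequence of the adjunctions: in the presence of the recollement and the compact generation hypotheses it is equivalent to $j^{!}$ preserving compactness, and to the localization $j^{!}$ being smashing (equivalently, $j_{*}$ — or equivalently $i^{!}$ — preserving coproducts), none of which holds for an arbitrary recollement; it is precisely at this point that the compact generation of $\mathcal{D}'$ has to be used, and establishing it is the substance of Neeman's localization theorem. Granting it, one finishes as before: Neeman's theorem applied to $j^{!}\colon\mathcal{D}\to\mathcal{D}/\Ker j^{!}\simeq\mathcal{D}''$ gives that $j^{!}$ preserves compactness and that $(\Ker j^{!})^{c}=\Ker j^{!}\cap\mathcal{D}^{c}$; and $i_{*}$ restricts to a triangle equivalence $\mathcal{D}'\xrightarrow{\sim}\Im i_{*}=\Ker j^{!}$ under which compact objects match compact objects (coproducts in $\Ker j^{!}$ being computed in $\mathcal{D}$), so $i_{*}$ maps $(\mathcal{D}')^{c}$ into $\Ker j^{!}\cap\mathcal{D}^{c}\subseteq\mathcal{D}^{c}$. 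Hence the whole first row restricts to compact objects.
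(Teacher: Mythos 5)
Your first two paragraphs establish the lemma completely, and by what is evidently the intended route: the paper gives no proof of its own but simply cites Neeman's localization theorem, of which this lemma is a recollement-flavoured repackaging. The formal adjunction argument for the first row preserving compacts, the identification $\Ker i^{*}=\Im j_{!}=\mathrm{Loc}(j_{!}\mathcal{G}'')$ with $j_{!}\mathcal{G}''$ a set of compact objects of $\mathcal{D}$, and the application of the localization theorem to $i^{*}\colon\mathcal{D}\to\mathcal{D}/\Ker i^{*}\simeq\mathcal{D}'$ to get density up to direct summands on compacts are all correct; this is exactly what the paper later uses in the proof of Theorem \ref{thm1}.

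Your third paragraph, however, rests on a misreading of ``the first row.'' In the convention of the paper (and of Qin--Han), the rows of a recollement are, from top to bottom, $(i^{*},j_{!})$, $(i_{*}=i_{!},\,j^{!}=j^{*})$, $(i^{!},j_{*})$; the first row is the pair of outermost left adjoints $i^{*}$ and $j_{!}$, which you have already handled. The additional claim you label ``the hard part''---that $\Ker j^{!}=\Im i_{*}$ is generated by objects compact in $\mathcal{D}$, equivalently that $i_{*}$ and $j^{!}$ preserve compactness---is not part of the statement, and, as you half-suspect, it cannot be proven: compact generation of $\mathcal{D}'$ only supplies generators compact in $\Im i_{*}$ (where coproducts are computed in $\mathcal{D}$, but compactness is tested only against coproducts of objects of $\Im i_{*}$), not compact in $\mathcal{D}$. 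It is genuinely false for general recollements: by the paper's Lemma \ref{P2.3}, $i_{*}$ preserves $\mathcal{K}^{b}(\mathrm{proj})$ precisely when $i^{!}$ admits a further right adjoint, i.e.\ when the recollement extends one step downwards into a $2$-recollement; if the second row automatically preserved compacts, every recollement would extend to a ladder of arbitrary height and the distinction between $n$-recollements---the subject of the paper---would collapse. Delete the third paragraph: your proof is already complete without it, whereas as written it presents the argument as hinging on an unresolved (and unresolvable) step.
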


    \begin{lem}[{\cite[Proposition 2.5]{Liu2015}}]\label{LiuP2.5}
        Let
            $$
                \xymatrix@C=3em{
                    \mathcal{D}'\ar@{->}[r]|-{i_{\ast}=i_{!} }& \mathcal{D} \ar@<2ex>[l]^{i^{!}} \ar@{->}[r]|-{j^{!}=j^{\ast}} \ar@<-2ex>[l]_{i^{\ast}} & \mathcal{D}''\ar@<2ex>[l]^{j_{\ast}}\ar@<-2ex>[l]_{j_{!}}\\
            }
            $$
        be a recollement of $\mathcal{D}$ relative to $\mathcal{D}'$ and $\mathcal{D}''$.
        Let $\mathcal{T}$ is a thick subcategory of $\mathcal{D}$ and $\mathcal{T}'=i^{\ast}\mathcal{T}$ and $\mathcal{T}''=j^{\ast}\mathcal{T}$. If $i_{\ast}i^{\ast}\mathcal{T} \subset \mathcal{T}$ and $j_{\ast}j^{\ast}\mathcal{T}\subset \mathcal{T}$, then there exists a recollement of triangulated quotient categories
            $$
                \xymatrix@C=3em{
                    \mathcal{D}'/\mathcal{T}' \ar@{->}[r]|-{\tilde{i_{\ast}}=\tilde{i_{!}} }& \mathcal{D}/\mathcal{T} \ar@<2ex>[l]^{\tilde{i^{!}}} \ar@{->}[r]|-{\tilde{j^{!}}=\tilde{j^{\ast}}} \ar@<-2ex>[l]_{\tilde{i^{\ast}}} & \mathcal{D}''/\mathcal{T}'' \ar@<2ex>[l]^{\tilde{j_{\ast}}}\ar@<-2ex>[l]_{\tilde{j_{!}}}\\
            }
            $$
    \end{lem}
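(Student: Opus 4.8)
The plan is to push all eight functors of the recollement through the Verdier quotients and check that the resulting diagram still satisfies the recollement axioms; write $Q\colon\mathcal{D}\to\mathcal{D}/\mathcal{T}$, $Q'\colon\mathcal{D}'\to\mathcal{D}'/\mathcal{T}'$ and $Q''\colon\mathcal{D}''\to\mathcal{D}''/\mathcal{T}''$ for the quotient functors. First I would check that $\mathcal{T}'=i^{\ast}\mathcal{T}$ and $\mathcal{T}''=j^{\ast}\mathcal{T}$ really are thick subcategories. If $X\to Y\to Z\to X[1]$ is a triangle in $\mathcal{D}'$ with $X,Y\in i^{\ast}\mathcal{T}$, then applying the triangle functor $i_{\ast}$ and invoking $i_{\ast}i^{\ast}\mathcal{T}\subseteq\mathcal{T}$ gives $i_{\ast}X,i_{\ast}Y\in\mathcal{T}$, hence $i_{\ast}Z\in\mathcal{T}$, and then $Z\cong i^{\ast}i_{\ast}Z\in i^{\ast}\mathcal{T}$ because $i^{\ast}i_{\ast}\cong\mathrm{id}$; the same trick with a direct sum in place of a triangle gives closure under summands, and $\mathcal{T}''$ is handled symmetrically using $j_{\ast}$ and $j_{\ast}j^{\ast}\mathcal{T}\subseteq\mathcal{T}$.

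Next I would record the inclusions needed for the functors to descend. Evaluating the two recollement triangles at $T\in\mathcal{T}$,
\[
 j_{!}j^{!}(T)\to T\to i_{\ast}i^{\ast}(T)\to j_{!}j^{!}(T)[1],\qquad
 i_{!}i^{!}(T)\to T\to j_{\ast}j^{\ast}(T)\to i_{!}i^{!}(T)[1],
\]
the hypothesis $i_{\ast}i^{\ast}\mathcal{T}\subseteq\mathcal{T}$ forces $j_{!}j^{!}\mathcal{T}\subseteq\mathcal{T}$, and $j_{\ast}j^{\ast}\mathcal{T}\subseteq\mathcal{T}$ forces $i_{!}i^{!}\mathcal{T}\subseteq\mathcal{T}$, hence $i^{!}\mathcal{T}\subseteq\mathcal{T}'$ after applying $i^{\ast}$ and using $i^{\ast}i_{\ast}\cong\mathrm{id}$ together with $i_{!}=i_{\ast}$. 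Combined with the tautologies $i^{\ast}\mathcal{T}\subseteq\mathcal{T}'$, $j^{\ast}\mathcal{T}\subseteq\mathcal{T}''$ and the hypotheses $i_{\ast}\mathcal{T}'\subseteq\mathcal{T}$, $j_{\ast}\mathcal{T}''\subseteq\mathcal{T}$, this shows that each of $i^{\ast},i_{\ast},i^{!},j_{!},j^{\ast},j_{\ast}$ carries the relevant thick subcategory into the other, so by the universal property of Verdier localization each factors uniquely through the quotient functors, yielding triangle functors $\widetilde{i^{\ast}},\widetilde{i_{\ast}},\widetilde{i^{!}},\widetilde{j_{!}},\widetilde{j^{\ast}},\widetilde{j_{\ast}}$ with $\widetilde{i^{\ast}}Q=Q'i^{\ast}$, $\widetilde{i_{\ast}}Q'=Qi_{\ast}$, and so on. In particular $\widetilde{i_{\ast}}=\widetilde{i_{!}}$ and $\widetilde{j^{\ast}}=\widetilde{j^{!}}$, since they descend from equal functors.

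Finally I would verify the recollement axioms for the descended diagram. Each original adjunction descends: the unit and counit are natural transformations between composites of the original functors, and the intertwining identities $\widetilde{i^{\ast}}Q=Q'i^{\ast}$, $\widetilde{i_{\ast}}Q'=Qi_{\ast}$ (and the analogues for the $j$'s) allow one to whisker them with the quotient functors and descend them uniquely to a unit and counit satisfying the triangle identities; thus $(\widetilde{i^{\ast}},\widetilde{i_{\ast}})$, $(\widetilde{i_{!}},\widetilde{i^{!}})$, $(\widetilde{j_{!}},\widetilde{j^{!}})$ and $(\widetilde{j^{\ast}},\widetilde{j_{\ast}})$ are adjoint pairs. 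A right (resp.\ left) adjoint is fully faithful precisely when its counit (resp.\ unit) is invertible; the descended counits and units are obtained by applying $Q,Q',Q''$ to the original ones, so invertibility of $i^{\ast}i_{\ast}\xrightarrow{\ \sim\ }\mathrm{id}$, $j^{\ast}j_{\ast}\xrightarrow{\ \sim\ }\mathrm{id}$ and $\mathrm{id}\xrightarrow{\ \sim\ }j^{!}j_{!}$ passes to the quotients, making $\widetilde{i_{\ast}},\widetilde{j_{\ast}},\widetilde{j_{!}}$ full embeddings. The vanishing $\widetilde{j^{!}}\widetilde{i_{\ast}}=0$ holds because every object of $\mathcal{D}'/\mathcal{T}'$ is $Q'X$ for some $X\in\mathcal{D}'$ and $\widetilde{j^{!}}\widetilde{i_{\ast}}(Q'X)=Q''(j^{!}i_{\ast}X)=0$. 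The two defining triangles are obtained by applying the triangle functor $Q$ to the original recollement triangles at an object $X\in\mathcal{D}$, using $Q(i_{!}i^{!}X)=\widetilde{i_{!}}\widetilde{i^{!}}(QX)$, $Q(j_{\ast}j^{\ast}X)=\widetilde{j_{\ast}}\widetilde{j^{\ast}}(QX)$, $Q(j_{!}j^{!}X)=\widetilde{j_{!}}\widetilde{j^{!}}(QX)$, $Q(i_{\ast}i^{\ast}X)=\widetilde{i_{\ast}}\widetilde{i^{\ast}}(QX)$, and the fact that the connecting morphisms are the images of the adjunction morphisms.

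The step I expect to require the most care is the diagram chase in the second paragraph that extracts $j_{!}j^{!}\mathcal{T}\subseteq\mathcal{T}$ and $i^{!}\mathcal{T}\subseteq\mathcal{T}'$ from the two stated hypotheses; once those inclusions are available, everything else is formal manipulation with the universal property of the Verdier quotient, the descent of adjunctions, and the unit/counit criterion for full faithfulness.
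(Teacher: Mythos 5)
Your argument is correct and is essentially the standard proof of this statement; the paper itself gives no proof, citing it as Proposition 2.5 of Liu--Lu, whose argument proceeds exactly as you do: verify that $\mathcal{T}'=i^{\ast}\mathcal{T}$ and $\mathcal{T}''=j^{\ast}\mathcal{T}$ are thick, use the two recollement triangles to deduce $j_{!}j^{!}\mathcal{T}\subseteq\mathcal{T}$ and $i_{!}i^{!}\mathcal{T}\subseteq\mathcal{T}$ so that all six functors preserve the relevant subcategories, and then descend the functors, adjunctions, full embeddings and canonical triangles through the Verdier quotients. Your identification of the diagram chase producing $j_{!}j^{!}\mathcal{T}\subseteq\mathcal{T}$ and $i^{!}\mathcal{T}\subseteq\mathcal{T}'$ as the only non-formal step is accurate.
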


    \begin{defn}[{\cite[Definition 2]{Qin2016}}]
        Let $\mathcal{D},\mathcal{D}'$ and $\mathcal{D}''$ be triangulated categories and $n$ a positive integer. An {\em n-recollement}  of $\mathcal{D}$ relative to $\mathcal{D}'$ and $\mathcal{D}''$ is a diagram with $n+2$ rows
        $$
         \xymatrix@C=6em{
                    \mathcal{D}'\ar@<1ex>[r]  \ar@<-1ex>[r]_-{\vdots} & \mathcal{D} \ar@<-2ex>[l]  \ar@{->}[l]  \ar@<1ex>[r] \ar@<-1ex>[r]_-{\vdots}  & \mathcal{D}'' \ar@<-2ex>[l]  \ar@{->}[l]   \\
         }
        $$
        of triangulated categories and triangle functors such that every consecutive three rows form a recollement.
    \end{defn}

    It is obvious that 1-recollement is just a recollement. Moreover, if $\mathcal{D}$ admits an $n$-recollement $R$ relative to $D'$ and $D''$, then any consecutive $m$ rows in $R$ form a $(m-2)$-recollement for $3\leq m \leq n$. $N$-recollements are also referred as ladders of height $n$ in some other references (see \cite[Section 3]{Hugel2017}).

    We focus on $n$-recollements of derived categories of algebras. For convenience, we fix a field $k$. Let $A$ be a finite dimensional $k$-algebra. We consider the category Mod$A$ of left $A$-modules and its full subcategories mod$A$, Proj$A$, and proj$A$ of finitely generated modules, projective modules, and finitely generated projective modules respectively. Let $\mathcal{A}$ be any above subcategory of Mod$A$, for $\ast \in \{\emptyset,b\}$, we denote by $\mathcal{D}^{\ast}(\mathcal{A})$ (resp. $\mathcal{K}^{\ast}(\mathcal{A})$) the derived category (resp. homotopy category) of cochain complexes of objects in $\mathcal{A}$ satisfying the corresponding boundedness condition.

    The following lemma will be used in the proof of the Theorem \ref{thm2}.

    \begin{lem}[{\cite[Lemma 2.7]{Hugel2017}},{\cite[Lemma 2.14]{Cummings2023}}]\label{P2.3}
        Let $A$ and $B$ be two finite dimensional algebras, and $F:\mathcal{D}({\rm{Mod}}A) \to \mathcal{D}({\rm{Mod}}B)$ be a triangle functor with a right adjoint $G$. Consider the following conditions
        \begin{enumerate}
          \item $F$ preserves $\mathcal{K}^{b}({\rm{proj}})$;
          \item $G$ admits a right adjoint;
          \item $G$ preserves $\mathcal{D}^{b}({\rm{mod}})$;
          \item $G$ preserves $\mathcal{D}^{b}({\rm{Mod}})$;
          \item $F$ preserves $\mathcal{K}^{b}(\rm{Proj})$.
        \end{enumerate}
        Then we have $(1)\Leftrightarrow(2)\Leftrightarrow (3)\Rightarrow (4)\Leftrightarrow(5)$.
    \end{lem}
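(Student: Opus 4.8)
The plan is to reduce every one of the five conditions to a property of the single object $F(A)\in\mathcal D(\mathrm{Mod}B)$. The bridge is the identity $H^{n}(M)\cong\mathrm{Hom}_{\mathcal D(\mathrm{Mod}A)}(A,M[n])$ together with the adjunction $F\dashv G$ (and the fact that $F$, being a left adjoint, commutes with shifts and coproducts), which yields, for all $Y\in\mathcal D(\mathrm{Mod}B)$ and $n\in\Zahlen$,
\[
H^{n}\bigl(G(Y)\bigr)\;\cong\;\mathrm{Hom}_{\mathcal D(\mathrm{Mod}A)}\bigl(A,G(Y)[n]\bigr)\;\cong\;\mathrm{Hom}_{\mathcal D(\mathrm{Mod}B)}\bigl(F(A),Y[n]\bigr).
\]
Recall also $\mathcal K^{b}(\mathrm{proj}A)=\mathcal D(\mathrm{Mod}A)^{c}$ is the thick subcategory generated by $A$, so $F$ preserves $\mathcal K^{b}(\mathrm{proj})$ iff $F(A)\in\mathcal K^{b}(\mathrm{proj}B)$; likewise, since $\mathcal K^{b}(\mathrm{Proj}B)$ is thick (closed under summands, cones, shifts, and coproducts of uniformly bounded families) and $F$ preserves coproducts, $F$ preserves $\mathcal K^{b}(\mathrm{Proj})$ iff $F(A)\in\mathcal K^{b}(\mathrm{Proj}B)$. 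Thus the whole statement becomes a catalogue of finiteness properties of $F(A)$.

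For $(1)\Leftrightarrow(2)$ I would use the standard compactly generated machinery behind Lemma \ref{Neeman92}: since $\mathcal K^{b}(\mathrm{proj})=(-)^{c}$, condition $(1)$ says $F$ preserves compact objects; a left adjoint preserves compacts iff its right adjoint $G$ preserves coproducts; and because $\mathcal D(\mathrm{Mod}B)$ is compactly generated, $G$ preserves coproducts iff $G$ admits a right adjoint, i.e. $(2)$. For $(1)\Leftrightarrow(3)$: if $(1)$ holds then $F(A)\in\mathcal K^{b}(\mathrm{proj}B)$, so for $Y\in\mathcal D^{b}(\mathrm{mod}B)$ the complex $\mathrm{RHom}_{B}(F(A),Y)$ is homologically bounded with finite-dimensional cohomology (reduce by thick-subcategory induction to the case $F(A)=B$, where it is just $Y$), and by the displayed formula this is exactly $G(Y)\in\mathcal D^{b}(\mathrm{mod}A)$. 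Conversely, assuming $(3)$: testing $G$ against $Y=DB$ (with $D=\mathrm{Hom}_{k}(-,k)$) and $\mathrm{RHom}_{B}(F(A),DB)\cong D\,F(A)$ forces $F(A)\in\mathcal D^{b}(\mathrm{mod}B)$; then testing against the finitely many simple $B$-modules $S$, the vanishing $\mathrm{Ext}^{n}_{B}(F(A),S)=H^{n}(G(S))=0$ for $|n|\gg0$ forces, via a minimal projective resolution of $F(A)$ and a pigeonhole argument over the simples, $\mathrm{pd}_{B}F(A)<\infty$, hence $F(A)\in\mathcal K^{b}(\mathrm{proj}B)$, i.e. $(1)$.

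For $(3)\Rightarrow(4)$ it now suffices to use $(1)$: write $F(A)$ as a bounded complex of finitely generated projectives in degrees $[c,d]$ and $Y\in\mathcal D^{b}(\mathrm{Mod}B)$ as a complex in degrees $[a,b]$; then $\mathrm{Hom}^{\bullet}_{B}(F(A),Y)$ is concentrated in degrees $[a-d,b-c]$, so $H^{n}(G(Y))=0$ for $|n|\gg0$ and $G(Y)\in\mathcal D^{b}(\mathrm{Mod}A)$. The same degree count gives $(5)\Rightarrow(4)$, now with $F(A)$ a bounded complex of arbitrary projectives. Finally $(4)\Rightarrow(5)$ runs exactly like the second half of $(1)\Leftrightarrow(3)$ with $\mathcal D^{b}(\mathrm{Mod})$ replacing $\mathcal D^{b}(\mathrm{mod})$: testing $G$ against $DB$ shows $F(A)$ is homologically bounded, and testing against the finitely many simples shows $\mathrm{pd}_{B}F(A)<\infty$, so $F(A)\in\mathcal K^{b}(\mathrm{Proj}B)$, i.e. $(5)$.

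The main obstacle is the pair of ``finiteness'' implications $(3)\Rightarrow(1)$ and $(4)\Rightarrow(5)$: one must upgrade a vanishing $\mathrm{Ext}^{n}_{B}(F(A),S)=0$ that is only guaranteed for $n\gg0$ \emph{for each individual} simple $S$ into a single uniform bound, and this is precisely where the finiteness of the set of simple $B$-modules (equivalently, the finite-dimensionality of $B$) is used, through a minimal-resolution argument. A further technical point is that in $\mathcal K^{b}(\mathrm{Proj}B)$ the projectives need not be finitely generated, so in the step $(4)\Rightarrow(5)$ one must work with minimal projective resolutions of complexes over the (left and right perfect) ring $B$ rather than with compact objects; the identification ``$F$ preserves $\mathcal K^{b}(\mathrm{Proj})$ $\iff$ $F(A)\in\mathcal K^{b}(\mathrm{Proj}B)$'' also has to be justified using that $F$ preserves coproducts and that $\mathcal K^{b}(\mathrm{Proj}B)$ is thick.
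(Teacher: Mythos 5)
The paper does not prove this lemma itself (it is quoted from the cited sources), and your argument is a correct reconstruction of the standard proof found there: reduce everything to finiteness properties of $F(A)$ via $H^{n}(G(Y))\cong\mathrm{Hom}(F(A),Y[n])$, use compact generation and Brown representability for $(1)\Leftrightarrow(2)$, duality against $DB$ for boundedness, and minimal projective resolutions over the (perfect) algebra $B$ together with the finitely many simples for the implications $(3)\Rightarrow(1)$ and $(4)\Rightarrow(5)$. The two technical points you flag (uniformity over the simples, and closure of $\mathcal{K}^{b}(\mathrm{Proj}B)$ under summands and uniformly bounded coproducts) are exactly the points that need care, and your treatment of them is sound.
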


\section{$n$-recollements of stable categories}

    In this section, we will recall the definition of virtually Gorenstein algebras and prove Theorem (I).

    Let $\mathcal{A}$ be an abelian category with enough projective objects. We denote by P($\mathcal{A}$) the full subcategory of  $\mathcal{A}$ which is formed by all projective objects.

    \begin{defn}[\cite{Auslander1969}]\label{d3.1}
      Let $\mathcal{A}$ be an abelian category with enough projective objects. An object $M\in \mathcal{A}$ is called $Gorenstein\ projective$ if there is a complex $P^{\bullet}$
      $$
        \xymatrix@C=3em{
        \cdots \ar@[r][r] & P^{-1} \ar@{->}[r]^{d^{-1}} &P^{0} \ar@{->}[r]^{d^{0 }} &P^{1} \ar@{->}[r]^{d^{1}} &\cdots
        }
      $$
      of objects in $\rm{P}(\mathcal{A})$ such that
      \begin{enumerate}
        \item $P^{\bullet}$ is acyclic;
        \item for all $Q \in {\rm{P}}(\mathcal{A})$, the complex $\mathcal{H}om_{\mathcal{A}}^{\bullet}(P^{\bullet},Q)$ is acyclic;
        \item $M \simeq \Im d^{0}$.
      \end{enumerate}
    \end{defn}

     We denote by GP($\mathcal{A}$) the full subcategory of $\mathcal{A}$ which is formed by all Gorenstein projective objects. The GP($\mathcal{A}$) is a Frobenius category whose projective objects are exactly the projective-injective objects in $\mathcal{A}$. The stable category $\underline{\rm{GP}}(\mathcal{A})$ is a triangulated category. There is a triangle embedding $\underline{\rm{GP}}(\mathcal{A}) \hookrightarrow \mathcal{D}^{b}_{sg}(\mathcal{A})=\mathcal{D}^{b}(\mathcal{A})/\mathcal{K}^{b}({\rm{P}}(\mathcal{A}))$ (\cite{Buchweitz2021},\cite{Happel1991}).

    Virtually Gorenstein algebras was introduced in \cite{Beligiannis2005}. They share many properties with genuine Gorenstein algebras. This class of algebras has rich homological structures and is closed under various operations such as derived equivalences and stable equivalences of Morita type.

    \begin{defn}[{\cite{Beligiannis2005}}]
        Let $A$ be a finite dimensional algebra. Then $A$ is $virtually$ $Gorenstein$ if for $X\in {\rm{Mod}}A$, the functor ${\rm{Ext}}_{A}^{i}(X,-)$ vanishes for all $i>0$ on all Gorenstein injective modules iff ${\rm{Ext}}_{A}^{i}(-,X)$ vanishes for all  $i>0$  on all Gorenstein projective modules.
    \end{defn}

    Not only Gorenstein algebras, but radical square zero algebras and algebras of finite representation type are all virtually Gorenstein algebras. We study the virtual Gorensteinness of algebra through $n$-recollements.

    Let $A$ be a finite dimensional algebra, we  abbreviate GP(Mod$A$) and $\underline{\rm{GP}}({\rm{Mod}}A)$ as GP($A$) and $\underline{\rm{GP}}(A)$ respectively. Following \cite{Beligiannis2005}, the triangulated category $\underline{\rm{GP}}(A)$ is compactly generated.

    Let ${\rm{Gp}}(A)={\rm{GP}}(A) \cap {\rm{mod}} A$ and $\underline{\rm{Gp}}(A)$ be the stable category of ${\rm{Gp}}(A)$. There is a full triangle embedding $ \underline{\rm{Gp}}(A) \hookrightarrow (\underline{\rm{GP}}(A))^{c}$.

    The following lemma provides a equivalent characterization of virtually Gorenstein algebra.

    \begin{lem}[{\cite[Theorem 8.2]{Beligiannis2005}}]\label{B05T8.2}
        Let $A$ be a finite dimensional algebra, $A$ is $virtually$ $Gorenstein$ if and only if $(\underline{\rm{GP}}(A))^{c}= \underline{\rm{Gp}}(A)$.
    \end{lem}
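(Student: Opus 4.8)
The plan is to convert the statement into a claim about which localizing subcategory of $\underline{\mathrm{GP}}(A)$ is generated by the finitely generated Gorenstein projectives, and then to feed in a homological characterization of virtual Gorensteinness. Since $\underline{\mathrm{GP}}(A)$ is compactly generated, $\underline{\mathrm{GP}}(A)=\mathrm{Loc}\,(\underline{\mathrm{GP}}(A))^{c}$; recall also that $\underline{\mathrm{Gp}}(A)$ sits inside $(\underline{\mathrm{GP}}(A))^{c}$ as a thick subcategory which is closed under direct summands (a summand of a finitely generated module is finitely generated, and $\mathrm{mod}A$ is Krull--Schmidt). By Neeman's localization theorem \cite{ASENS_1992_4_25_5_547_0}, for a set $\mathcal{S}$ of compact objects of a compactly generated triangulated category the subcategory $\mathrm{Loc}\,\mathcal{S}$ is compactly generated and $(\mathrm{Loc}\,\mathcal{S})^{c}$ is the smallest thick subcategory containing $\mathcal{S}$ and closed under direct summands. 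Applying this with $\mathcal{S}=\underline{\mathrm{Gp}}(A)$ yields
$$
(\underline{\mathrm{GP}}(A))^{c}=\underline{\mathrm{Gp}}(A)\quad\Longleftrightarrow\quad\underline{\mathrm{GP}}(A)=\mathrm{Loc}\,\underline{\mathrm{Gp}}(A),
$$
so it remains to prove that $A$ is virtually Gorenstein if and only if $\underline{\mathrm{Gp}}(A)$ compactly generates $\underline{\mathrm{GP}}(A)$.

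For the implication from virtual Gorensteinness, I would invoke the known equivalent formulation (see \cite{Beligiannis2005}) that $A$ is virtually Gorenstein precisely when every Gorenstein projective module is a filtered colimit of finitely generated Gorenstein projective modules, i.e. $\mathrm{GP}(A)=\varinjlim\mathrm{Gp}(A)$. Given $M\in\mathrm{GP}(A)$, write $M=\varinjlim_{i}M_{i}$ over a directed system with all $M_{i}\in\mathrm{Gp}(A)$. The canonical presentation
$$
0\longrightarrow\bigoplus_{i\leq j}M_{i}\longrightarrow\bigoplus_{i}M_{i}\longrightarrow M\longrightarrow 0,
$$
exact because the system is directed, has all three terms in $\mathrm{GP}(A)$: the two coproducts lie in $\mathrm{GP}(A)$ since it is closed under arbitrary coproducts, and $M$ is Gorenstein projective by hypothesis. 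Hence this is a conflation in the Frobenius category $\mathrm{GP}(A)$ and induces a triangle in $\underline{\mathrm{GP}}(A)$ whose two outer terms are coproducts of objects of $\underline{\mathrm{Gp}}(A)$, so $M\in\mathrm{Loc}\,\underline{\mathrm{Gp}}(A)$. Since $M$ was arbitrary, $\underline{\mathrm{GP}}(A)=\mathrm{Loc}\,\underline{\mathrm{Gp}}(A)$.

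The converse is the step I expect to be the main obstacle. Assume $\underline{\mathrm{GP}}(A)=\mathrm{Loc}\,\underline{\mathrm{Gp}}(A)$; equivalently the right orthogonal of $\underline{\mathrm{Gp}}(A)$ in $\underline{\mathrm{GP}}(A)$ vanishes, i.e. any $N\in\mathrm{GP}(A)$ with $\mathrm{Hom}_{\underline{\mathrm{GP}}(A)}(M,N[n])=0$ for all $M\in\mathrm{Gp}(A)$ and all $n\in\Zahlen$ is zero in $\underline{\mathrm{GP}}(A)$. The task is to promote this triangulated vanishing to a statement in $\mathrm{Mod}A$, and I would aim to deduce $\mathrm{GP}(A)^{\perp}=\mathrm{Gp}(A)^{\perp}$, where $\perp$ denotes orthogonality with respect to all $\mathrm{Ext}^{>0}_{A}$: one expresses $\mathrm{Ext}^{>0}_{A}(G,Y)$ for $G\in\mathrm{GP}(A)$ through stable $\mathrm{Hom}$-groups in $\underline{\mathrm{GP}}(A)$ along a Gorenstein projective approximation of $Y$, and checks that $\mathrm{Gp}(A)^{\perp}$ is closed under the directed colimits produced in the previous paragraph. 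Combining this with Beligiannis' analysis of the Gorenstein injective side \cite{Beligiannis2005} should identify $\mathrm{Gp}(A)^{\perp}$ with ${}^{\perp}\mathrm{GI}(A)$, whence $\mathrm{GP}(A)^{\perp}={}^{\perp}\mathrm{GI}(A)$, which is the defining condition of a virtually Gorenstein algebra. The delicate points I anticipate are the passage between vanishing of stable (Tate) $\mathrm{Ext}$ in $\underline{\mathrm{GP}}(A)$ and of ordinary $\mathrm{Ext}$ in $\mathrm{Mod}A$, which requires controlling syzygies and cosyzygies, and the interaction of $\mathrm{Ext}$ against objects of $\mathrm{Gp}(A)^{\perp}$ with filtered colimits, where the finite generation of the objects of $\mathrm{Gp}(A)$ is essential; modulo these, the three steps assemble into the asserted equivalence.
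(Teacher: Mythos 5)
First, a point of comparison: the paper does not prove this lemma at all --- it is quoted verbatim as \cite[Theorem 8.2]{Beligiannis2005}, so there is no in-paper argument to measure yours against. Judged on its own terms, your opening reduction is fine: since $\underline{\mathrm{Gp}}(A)$ is Krull--Schmidt, hence idempotent complete and thick inside $(\underline{\mathrm{GP}}(A))^{c}$, Neeman's theorem does give the equivalence between $(\underline{\mathrm{GP}}(A))^{c}=\underline{\mathrm{Gp}}(A)$ and $\underline{\mathrm{GP}}(A)=\mathrm{Loc}\,\underline{\mathrm{Gp}}(A)$. But the remainder has two genuine problems. The first is circularity: the characterization ``$A$ is virtually Gorenstein iff $\mathrm{GP}(A)=\varinjlim\mathrm{Gp}(A)$'' that you feed into the forward direction is itself one of the equivalent clauses of the very Theorem 8.2 you are trying to prove, and it is not an easier clause; invoking it shifts rather than discharges the burden, so the proposal cannot count as an independent proof.

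The second is a concrete error in the forward direction: the sequence $0\to\bigoplus_{i\leq j}M_{i}\to\bigoplus_{i}M_{i}\to M\to 0$ is \emph{not} exact on the left for a general directed system; the canonical presentation of a filtered colimit is only right exact. Already for a chain $i<j<k$ the element $m_{(i,j)}+f_{ij}(m)_{(j,k)}-m_{(i,k)}$ is a nonzero element of the kernel of the first map. What you actually obtain is $0\to K\to\bigoplus_{i}M_{i}\to M\to 0$ with $K$ merely some Gorenstein projective module (a kernel of an epimorphism between Gorenstein projectives), and you then face the unchanged problem of showing $K\in\mathrm{Loc}\,\underline{\mathrm{Gp}}(A)$; the argument does not close. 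Finally, the converse --- the hard half of Beligiannis' theorem --- is left as a plan: the passage from vanishing of stable $\mathrm{Hom}$-groups to vanishing of ordinary $\mathrm{Ext}$, and the identification of $\mathrm{Gp}(A)^{\perp}$ with ${}^{\perp}\mathrm{GI}(A)$, is exactly where the cotorsion-pair machinery of \cite{Beligiannis2005} does its work, and you explicitly defer it (``modulo these''). As it stands the proposal is an outline with one broken step and one missing half, not a proof.
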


    \begin{thm}\label{thm1}
        Let $A,B$ and $C$ be finite dimensional algebras, and $\underline{\rm{GP}}(A)$ admits an $n$-recollement relative to $\underline{\rm{GP}}(C)$ and $\underline{\rm{GP}}(B)$ which restricts to $\underline{\rm{Gp}}$.
        \begin{enumerate}
          \item $n=1$: if $A$ is virtually Gorenstein then so are $B$ and $C$;
          \item $n\geq 2$: $A$ is virtually Gorenstein if and only if so are $B$ and $C$.
        \end{enumerate}
   \end{thm}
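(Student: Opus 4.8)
The plan is to translate virtual Gorensteinness into a single condition via Lemma~\ref{B05T8.2}: for $R\in\{A,B,C\}$, ``$R$ is virtually Gorenstein'' means $(\underline{\rm{GP}}(R))^{c}=\underline{\rm{Gp}}(R)$, and since the inclusion $\underline{\rm{Gp}}(R)\subseteq(\underline{\rm{GP}}(R))^{c}$ always holds, only the reverse inclusion is ever at stake. I would first record two elementary facts. (a) In any recollement of compactly generated triangulated categories, $i^{\ast}$ and $j_{!}$ preserve compact objects, because their right adjoints $i_{\ast}=i_{!}$ and $j^{\ast}=j^{!}$ are themselves left adjoints (of $i^{!}$ and of $j_{\ast}$) and hence preserve coproducts; moreover, by Lemma~\ref{Neeman92} the top row restricts to compacts and the restriction of $i^{\ast}$ to compacts is dense up to direct summands. (b) A direct summand of a finitely generated Gorenstein projective module is again one, and for $M\in{\rm{Gp}}(R)$ the ring $\End_{R}(M)$ is finite dimensional, hence semiperfect, so idempotents lift along $\End_{R}(M)\twoheadrightarrow\underline{\End}(M)$; consequently $\underline{\rm{Gp}}(R)$ is closed under direct summands inside $\underline{\rm{GP}}(R)$. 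Throughout I would use the standing hypothesis that every functor of the $n$-recollement sends the relevant $\underline{\rm{Gp}}$ into $\underline{\rm{Gp}}$.

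For the implication ``$A$ virtually Gorenstein $\Rightarrow$ $B$ and $C$ virtually Gorenstein'', which I expect to work already for $n=1$: assume $(\underline{\rm{GP}}(A))^{c}=\underline{\rm{Gp}}(A)$. For $C$, the functor $i^{\ast}$ preserves compacts and, by (a), is dense up to summands on compacts, while $i^{\ast}$ carries $\underline{\rm{Gp}}(A)=(\underline{\rm{GP}}(A))^{c}$ into $\underline{\rm{Gp}}(C)$; hence every object of $(\underline{\rm{GP}}(C))^{c}$ is a direct summand of an object of $\underline{\rm{Gp}}(C)$, so it lies in $\underline{\rm{Gp}}(C)$ by (b), and (together with $\underline{\rm{Gp}}(C)\subseteq(\underline{\rm{GP}}(C))^{c}$) we get $(\underline{\rm{GP}}(C))^{c}=\underline{\rm{Gp}}(C)$. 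For $B$, recall that $j_{!}$ is a compact-preserving full embedding with $j^{\ast}j_{!}\cong\mathrm{id}$; so for $X\in(\underline{\rm{GP}}(B))^{c}$ we get $j_{!}X\in(\underline{\rm{GP}}(A))^{c}=\underline{\rm{Gp}}(A)$ and therefore $X\cong j^{\ast}j_{!}X\in\underline{\rm{Gp}}(B)$, using that $j^{\ast}$ preserves $\underline{\rm{Gp}}$; thus $(\underline{\rm{GP}}(B))^{c}=\underline{\rm{Gp}}(B)$.

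For the converse ``$B$ and $C$ virtually Gorenstein $\Rightarrow$ $A$ virtually Gorenstein'', the hypothesis $n\ge2$ is used exactly once. Since $n\ge2$, the ladder has at least four rows, and among them there is a recollement whose top row carries the functors $i_{\ast}=i_{!}$ and $j^{\ast}=j^{!}$; by Lemma~\ref{Neeman92} applied to it, $j^{\ast}$ also preserves compact objects. Now assume $(\underline{\rm{GP}}(B))^{c}=\underline{\rm{Gp}}(B)$ and $(\underline{\rm{GP}}(C))^{c}=\underline{\rm{Gp}}(C)$, and take $X\in(\underline{\rm{GP}}(A))^{c}$. Then $i^{\ast}X\in(\underline{\rm{GP}}(C))^{c}=\underline{\rm{Gp}}(C)$ and $j^{\ast}X\in(\underline{\rm{GP}}(B))^{c}=\underline{\rm{Gp}}(B)$, so $i_{\ast}i^{\ast}X$ and $j_{!}j^{\ast}X$ both lie in $\underline{\rm{Gp}}(A)$; the recollement triangle
$$j_{!}j^{\ast}(X)\to X\to i_{\ast}i^{\ast}(X)\to j_{!}j^{\ast}(X)[1]$$
then exhibits $X$ as an extension of two objects of the triangulated subcategory $\underline{\rm{Gp}}(A)\subseteq\underline{\rm{GP}}(A)$, whence $X\in\underline{\rm{Gp}}(A)$. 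Therefore $(\underline{\rm{GP}}(A))^{c}=\underline{\rm{Gp}}(A)$, i.e.\ $A$ is virtually Gorenstein.

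I expect the main obstacle to be the converse, and within it the one step where $n\ge2$ genuinely matters: making $j^{\ast}$ (equivalently $i^{!}$) preserve compact objects --- this fails for a bare recollement and is exactly what the extra row provides through Lemma~\ref{Neeman92}. Everything else is bookkeeping: checking carefully that the given $n$-recollement really does restrict to the subcategories $\underline{\rm{Gp}}$, so that all the functors above land where claimed; and running the idempotent-lifting argument in (b), which upgrades ``dense up to summands'' in Lemma~\ref{Neeman92} to the honest equality $(\underline{\rm{GP}}(C))^{c}=\underline{\rm{Gp}}(C)$.
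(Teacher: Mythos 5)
Your proposal is correct and follows essentially the same route as the paper: reduce to $(\underline{\rm{GP}}(R))^{c}=\underline{\rm{Gp}}(R)$ via Lemma~\ref{B05T8.2}, handle $B$ via the full embedding $j_{!}$ and $C$ via the density-up-to-summands part of Lemma~\ref{Neeman92}, and prove the converse for $n\geq 2$ from the canonical triangle $j_{!}j^{!}X\to X\to i_{\ast}i^{\ast}X\to$ once the extra row makes $j^{!}$ preserve compacts. Your explicit justification that $\underline{\rm{Gp}}(R)$ is closed under direct summands (via semiperfect endomorphism rings) is a detail the paper leaves implicit, but it does not change the argument.
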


   \begin{proof}
   (1) Given a recollement
            \begin{equation*}
                \xymatrix@C=3em{
                  \underline{\rm{GP}}(C) \ar@{->}[r]   & \underline{\rm{GP}}(A) \ar@<-1ex>[l]  \ar@<1ex>[l]  \ar@{->}[r]   & \underline{\rm{GP}}(B) \ar@<-1ex>[l]  \ar@<1ex>[l]   \\
                }
        \end{equation*}
        Under conditions in the theorem, we have the following diagram.
        \begin{equation*}
            \xymatrix@C=3em{
                \underline{\rm{GP}}(C) \ar@{->}[r]|-{i_{\ast}=i_{!}}              & \underline{\rm{GP}}(A) \ar@<-1ex>[l]_{i^{\ast}} \ar@<1ex>[l]^{i^{!}}  \ar@{->}[r]|-{j^{!}=j^{\ast}}            & \underline{\rm{GP}}(B) \ar@<-1ex>[l]_{j_{!}}  \ar@<1ex>[l]^{j_{\ast}}   \\
                (\underline{\rm{GP}}(C))^{c}  \ar@{->}[u]           & (\underline{\rm{GP}}(A))^{c}  \ar@{->}[l]_{i^{\ast}} \ar@{->}[u]     & (\underline{\rm{GP}}(B))^{c} \ar@{->}[l]_{j_{!}}  \ar@{->}[u]              \\
                \underline{\rm{Gp}}(C) \ar@{->}[r]|-{i_{\ast}=i_{!}}  \ar@{->}[u] & \underline{\rm{Gp}}(A) \ar@<-1ex>[l]_{i^{\ast}} \ar@<1ex>[l]^{i^{!}}  \ar@{->}[r]|-{j^{!}=j^{\ast}} \ar@{->}[u]      & \underline{\rm{Gp}}(B) \ar@<-1ex>[l]_{j_{!}}  \ar@<1ex>[l]^{j_{\ast}} \ar@{->}[u]  \\
            }
        \end{equation*}
        Assume $A$ is virtually Gorenstein. Then $(\underline{\rm{GP}}(A))^{c}=\underline{\rm{Gp}}(A)$. For any object $Z\in (\underline{\rm{GP}}(B))^{c}$, we have and $Z \cong j^{!}j_{!}Z $ in $\underline{\rm{GP}}(B)$. Since $ j_{!}Z\in (\underline{\rm{GP}}(A))^{c}=\underline{\rm{Gp}}(A)$, we have $j^{!}j_{!}Z \in \underline{\rm{Gp}}(B)$. Then $(\underline{\rm{GP}}(B))^{c}=\underline{\rm{Gp}}(B)$. By Lemma \ref{B05T8.2}, $B$ is a virtually Gorenstein algebra.

       For any object $Y\in (\underline{\rm{GP}}(C))^{c}$, since $\underline{\rm{GP}}(\rm{Mod})$ is compactly generated, by Lemma \ref{Neeman92} there is $X\in (\underline{\rm{GP}}(A))^{c}=\underline{\rm{Gp}}(A)$ such that
       $Y$ is isomorphic to a direct summand of $i^{\ast} X$. Since  $i^{\ast}$ restricts to $\underline{\rm{Gp}}$,
       then $Y\in \underline{\rm{Gp}}(C)$. By Lemma \ref{B05T8.2}, $C$ is a virtually Gorenstein algebra.

      (2) Similarly, we have the following diagram.
      \begin{equation*}
        \xymatrix@C=3em{
            \underline{\rm{GP}}(C) \ar@<1ex>[r]|-{i_{\ast}=i_{!}} \ar@<-1ex>[r]              & \underline{\rm{GP}}(A) \ar@<-2ex>[l]_{i^{\ast}} \ar@{->}[l]|-{i^{!}}  \ar@<1ex>[r]|-{j^{!}=j^{\ast}} \ar@<-1ex>[r]             & \underline{\rm{GP}}(B) \ar@<-2ex>[l]_{j_{!}}  \ar@{->}[l]|-{j_{\ast}}   \\
            (\underline{\rm{GP}}(C))^{c} \ar@<-1ex>[r]_{i_{\ast}} \ar@{->}[u]           & (\underline{\rm{GP}}(A))^{c}  \ar@<-1ex>[l]_{i^{\ast}} \ar@<-1ex>[r]_{j^{!}} \ar@{->}[u]                     & (\underline{\rm{GP}}(B))^{c} \ar@<-1ex>[l]_{j_{!}}  \ar@{->}[u]              \\
             \underline{\rm{Gp}}(C) \ar@<1ex>[r]|-{i_{\ast}=i_{!}} \ar@<-1ex>[r] \ar@{->}[u] & \underline{\rm{Gp}}(A) \ar@<-2ex>[l]_{i^{\ast}} \ar@{->}[l]|-{i^{!}}  \ar@<1ex>[r]|-{j^{!}=j^{\ast}} \ar@<-1ex>[r] \ar@{->}[u]      & \underline{\rm{Gp}}(B) \ar@<-2ex>[l]_{j_{!}}  \ar@{->}[l]|-{j_{\ast}} \ar@{->}[u]  \\
        }
    \end{equation*}
   Assume $B$ and $C$ are virtually Gorenstein algebras. For any object $X \in (\underline{\rm{GP}}(A))^{c}$, we have the following triangle in $\underline{\rm{GP}}(A)$.
    $$
         \xymatrix@C=1em{
               j_{!}j^{!}X  \ar@{->}[r] & X \ar@{->}[r] & i_{\ast}i^{\ast} X \ar@{->}[r] &  (j_{!}j^{!}X)[1]   \\
         }
    $$
    Since $j^{!}X\in (\underline{\rm{GP}}(B))^{c}=\underline{\rm{Gp}}(B)$ and $i^{\ast}X \in (\underline{\rm{GP}}(C))^{c}=\underline{\rm{Gp}}(C)$, $j_{!}j^{!}X$ and $i_{\ast}i^{\ast} X$ are all in $\underline{\rm{Gp}}(A)$. It follows that $X$ belongs to $\underline{\rm{Gp}}(A)$. Thus $A$ is a virtually Gorenstein algebra.
   \end{proof}

    \begin{exm}
        Let $A$ be a finite dimensional algebra. Take a $M\in {\rm{mod}}A$ such that ${\rm{pd}} _{A}M<\infty$. We consider the triangular matrix algebra $T=\bigl(\begin{smallmatrix}
                                                                                 A & M \\
                                                                                 0 & k
                                                                               \end{smallmatrix}\bigr)$.
        Following \cite{Zhang2013}, $M$ is compatible. Let $I$ be the functor from ${\rm{Mod}}A$ to  ${\rm{Mod}}T$ which sends $X\in {\rm{Mod}}A$ to $\bigl(\begin{smallmatrix}
                                                                                                  X \\
                                                                                                  0
                                                                                                \end{smallmatrix}\bigr) \in {\rm{Mod}}T$. By \cite[Theorem 3.3]{Zhang2013}, we get that $I$ induced a triangle equivalence from
                                                                                                $\underline{\rm{GP}}(A)$ to $\underline{\rm{GP}}(T)$ which restricts to $\underline{\rm{Gp}}$. Then by Theorem \ref{thm1},  $A$ is virtually Gorenstein if and only if so is $T$.
   \end{exm}

\section{$n$-recollements of derived categories }
    In this section we investigate the relation between $n$-recollement of derived categories and virtually Gorenstein algebras. There is a certain kind of triangle functors between derived category, named by nonnegative functor, play an essential role in this section.

     \begin{defn}[{\cite[Definition 4.1]{Hu2017}}]
       Let $\mathcal{A}$ and $\mathcal{B}$ be two abelian categories with enough projective objects. A triangle functor $F:\mathcal{D}^{b}(\mathcal{A}) \to \mathcal{D}^{b}(\mathcal{B})$ is called {\em nonnegative} if $F$ satisfies the following conditions:
       \begin{enumerate}
         \item $F(X)$ is isomorphic to a complex with zero homology in all negative degrees for all $X\in \mathcal{A}$;
         \item $F(P)$ is isomorphic to a complex in $\mathcal{K}^{b}(\rm{P}(\mathcal{B}))$ with zero terms in all negative degrees for all $P\in \rm{P}(\mathcal{A})$.
       \end{enumerate}
     \end{defn}

     Let $\mathcal{A}$ and $\mathcal{B}$ be two abelian categories. A triangle functor $F:\mathcal{D}^{b}(\mathcal{A}) \to \mathcal{D}^{b}(\mathcal{B})$ is called {\em nonnegative up to} $shifts$ if there is a integer $n$ such that $F\circ [n]$ is nonnegative.

     \begin{lem}[{\cite[Lemma 3.4]{Chen2020}}]\label{l3.3}
        Let
            $$
            \xymatrix{ \mathcal{D}({\rm{Mod}}B) \ar@<-1ex>[r]_{G}  & \mathcal{D}({\rm{Mod}}A)  \ar@<-1ex>[l]_{F}
            }
            $$
        be an adjoint pair with both $F$ and $G$ preserving $\mathcal{K}^{b}(proj)$. Then, up to shifts, $G$ restricts to a nonnegative functor from $\mathcal{D}^{b}({\rm{Mod}}B)$ to $\mathcal{D}^{b}({\rm{Mod}}A)$.
     \end{lem}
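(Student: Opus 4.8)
The plan is to reduce the statement to two boundedness properties of $G$ that are \emph{uniform} over modules (respectively over projectives), and then to pick a single shift of $G$ that makes both conditions in the definition of a nonnegative functor hold. View $(F,G)$ as the adjoint pair with $F$ left adjoint to $G$. Since $F$ preserves $\mathcal{K}^{b}(\mathrm{proj})$, Lemma~\ref{P2.3} applied to $(F,G)$ gives that $G$ admits a right adjoint $H$ and preserves $\mathcal{D}^{b}(\mathrm{Mod})$, so it restricts to a functor $\mathcal{D}^{b}(\mathrm{Mod}\,B)\to\mathcal{D}^{b}(\mathrm{Mod}\,A)$. Applying Lemma~\ref{P2.3} once more, now to $(G,H)$ --- whose condition (1) is exactly the hypothesis that $G$ preserves $\mathcal{K}^{b}(\mathrm{proj})$ --- we get that $G$ preserves $\mathcal{K}^{b}(\mathrm{Proj})$. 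Moreover $G$, being a left adjoint of $H$, preserves coproducts; and by hypothesis it preserves $\mathcal{K}^{b}(\mathrm{proj})$, i.e.\ compact objects. In particular $G(B)\in\mathcal{K}^{b}(\mathrm{proj}\,A)$ and $F(A)\in\mathcal{K}^{b}(\mathrm{proj}\,B)$.

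For the bound over modules, let $X$ be a $B$-module and $i\in\Zahlen$. By adjunction, $H^{i}(G(X))\cong\Hom_{\mathcal{D}(\mathrm{Mod}\,A)}(A,G(X)[i])\cong\Hom_{\mathcal{D}(\mathrm{Mod}\,B)}(F(A),X[i])$. Representing $F(A)$ by a bounded complex of finitely generated projective $B$-modules concentrated in degrees $[p,q]$, we have $\Hom_{\mathcal{D}(\mathrm{Mod}\,B)}(F(A),X[i])=\Hom_{\mathcal{K}(\mathrm{Mod}\,B)}(F(A),X[i])$, and since $X$ lies in degree $0$ the Hom-complex $\Hom^{\bullet}_{B}(F(A),X[i])$ is concentrated in degrees $[-q-i,-p-i]$. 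Hence $H^{i}(G(X))=0$ for $i\notin[-q,-p]$, so $G$ carries every $B$-module to a complex with zero cohomology outside the degrees $[-q,-p]$, where $p,q$ depend only on $F(A)$.

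For the bound over projectives, let $\widetilde{Q}$ be the minimal bounded complex of finitely generated projective $A$-modules homotopy equivalent to $G(B)$, say concentrated in degrees $[s,t]$. Given a finitely generated projective $B$-module $P_{0}$, pick $P_{0}'$ with $P_{0}\oplus P_{0}'\cong B^{k}$; applying $G$ and passing to minimal complexes yields an isomorphism of complexes $\widetilde{G(P_{0})}\oplus\widetilde{G(P_{0}')}\cong\widetilde{Q}^{\,k}$, because two homotopy-equivalent minimal complexes of finitely generated projectives are isomorphic. Thus $\widetilde{G(P_{0})}$ is a direct summand, as a complex, of $\widetilde{Q}^{\,k}$, hence concentrated in degrees $[s,t]$; and since $G$ preserves coproducts and every projective $B$-module is a coproduct of finitely generated ones, $G(P)$ is isomorphic to a complex of projective $A$-modules concentrated in degrees $[s,t]$ for every projective $B$-module $P$. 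Now put $m:=\min\{-q,s\}$. For a $B$-module $X$, $(G\circ[m])(X)=G(X)[m]$ has zero cohomology in all degrees below $-q-m\ge 0$; for a projective $B$-module $P$, $(G\circ[m])(P)=G(P)[m]$ is a complex of projective $A$-modules with zero terms in all degrees below $s-m\ge 0$. Therefore $G\circ[m]$ is nonnegative, i.e.\ the restriction of $G$ to $\mathcal{D}^{b}(\mathrm{Mod}\,B)$ is nonnegative up to shifts.

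The part I expect to require the most care is the uniformity in the two middle steps: a priori the number of negative-degree cohomology groups of $G(X)$, or of negative-degree terms in the minimal model of $G(P)$, could depend on $X$ or on $P$. The uniform bounds are extracted by routing everything through the compact objects $F(A)$ and $G(B)$ --- the first via the displayed adjunction isomorphism, the second via the rigidity of minimal complexes --- after which it only remains to observe that each of the two nonnegativity conditions imposes an \emph{upper} bound on the shift $m$, so a sufficiently negative $m$ meets both.
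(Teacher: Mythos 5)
The paper does not prove this lemma; it is imported verbatim as a citation of \cite[Lemma 3.4]{Chen2020}, so there is no in-paper argument to compare against. Your proof is correct and is essentially the standard argument behind the cited result: you extract a uniform cohomological bound for $G(X)$ from the compact object $F(A)$ via the adjunction isomorphism $H^{i}(G(X))\cong \Hom_{\mathcal{D}(\mathrm{Mod}\,B)}(F(A),X[i])$, a uniform bound on the projective terms of $G(P)$ from the minimal model of $G(B)$ together with the uniqueness of minimal complexes and preservation of coproducts, and then a single sufficiently negative shift satisfies both clauses of the definition of nonnegativity. All the supporting facts you invoke (Lemma \ref{P2.3} applied to $(F,G)$ and to $(G,H)$, the reduction of $\Hom_{\mathcal{D}}$ to $\Hom_{\mathcal{K}}$ for bounded complexes of projectives, and the decomposition of arbitrary projectives over a finite dimensional algebra into finitely generated ones) are valid, and the degree bookkeeping checks out.
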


     \begin{rmk}\label{l4.3}
           Similarly, up to shifts, the functor $G$ also restricts to a nonegative functor from $\mathcal{D}^{b}({\rm{mod}}B)$ to $\mathcal{D}^{b}({\rm{mod}}A)$.
     \end{rmk}

     The following lemma is the key to link the functor of derived category and the functor of stable categories.

    \begin{lem}[{\cite[Theorem 5.3]{Hu2017}}]\label{hp5.3}
      Let $\mathcal{A}$ and $\mathcal{B}$ be two abelian categories with enough projective objects, and let $F:\mathcal{D}^{b}(\mathcal{A})\to \mathcal{D}^{b}(\mathcal{B})$ be a nonnegative triangle functor which admits a right adjoint $G$ with $G(Q) \in \mathcal{K}^{b}(\rm{P}(\mathcal{A}))$ for all $Q\in \rm{P}(\mathcal{B})$. Then there is a commutative (up to natural isomorphism) of triangle functors.
      $$
        \xymatrix{\ar @{} [dr]
        \underline{\rm{GP}}\mathcal{A} \ar[d]^{\overline{F}} \ar@{^{(}->}[r] &  \mathcal{D}^{b}(\mathcal{A})/\mathcal{K}^{b}(\rm{P}(\mathcal{A})) \ar[d]^{F} \\
        \underline{\rm{GP}}\mathcal{B} \ar@{^{(}->}[r]        &  \mathcal{D}^{b}(\mathcal{B})/\mathcal{K}^{b}(\rm{P}(\mathcal{A}))
        }
      $$
    \end{lem}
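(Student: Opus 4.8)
The plan is to show first that $F$ induces a triangle functor on the singularity categories $\mathcal{D}^{b}(-)/\mathcal{K}^{b}(\mathrm{P}(-))$ (this is the right-hand vertical arrow of the square, still denoted $F$), and then that this induced functor carries the subcategory $\underline{\mathrm{GP}}(\mathcal{A})$ into $\underline{\mathrm{GP}}(\mathcal{B})$; restricting along the Buchweitz--Happel embeddings $\underline{\mathrm{GP}}(-)\hookrightarrow\mathcal{D}^{b}(-)/\mathcal{K}^{b}(\mathrm{P}(-))$ then produces $\overline{F}$ and makes the square commute up to natural isomorphism.

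First I would use condition~(2) of nonnegativity: $F(P)\in\mathcal{K}^{b}(\mathrm{P}(\mathcal{B}))$ for every $P\in\mathrm{P}(\mathcal{A})$. Since $\mathcal{K}^{b}(\mathrm{P}(\mathcal{A}))$, viewed inside $\mathcal{D}^{b}(\mathcal{A})$, is the thick subcategory generated by the stalk complexes $P[0]$, $P\in\mathrm{P}(\mathcal{A})$, and $F$ is a triangle functor, we get $F\big(\mathcal{K}^{b}(\mathrm{P}(\mathcal{A}))\big)\subseteq\mathcal{K}^{b}(\mathrm{P}(\mathcal{B}))$, so $F$ descends to a triangle functor on the Verdier quotients. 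As $\underline{\mathrm{GP}}(\mathcal{A})$ and $\underline{\mathrm{GP}}(\mathcal{B})$ are full triangulated subcategories of these quotients, it suffices to prove the object-level inclusion: for every $M\in\mathrm{GP}(\mathcal{A})$, the object $F(M)$ of $\mathcal{D}^{b}(\mathcal{B})/\mathcal{K}^{b}(\mathrm{P}(\mathcal{B}))$ is isomorphic to (the image of) some Gorenstein projective $\mathcal{B}$-module; the restriction of the induced functor is then automatically a triangle functor making the square commute.

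To realise this image, fix $M\in\mathrm{GP}(\mathcal{A})$ and a complete projective resolution $P^{\bullet}$: $P^{\bullet}$ is acyclic, $\mathrm{Hom}^{\bullet}_{\mathcal{A}}(P^{\bullet},Q)$ is acyclic for all $Q\in\mathrm{P}(\mathcal{A})$, and $M=Z^{0}(P^{\bullet})$. Nonnegativity allows one to fix, functorially in $P\in\mathrm{P}(\mathcal{A})$, a representative $\mathbf{F}(P)$ of $F(P)$ that is an actual complex of projective $\mathcal{B}$-modules with zero terms in negative degrees; working with finitely generated modules over finite dimensional algebras one may take all the $P^{i}$ to involve only the finitely many indecomposable projectives, so the $\mathbf{F}(P^{i})$ lie in one window $[0,d]$. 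Then the total complex $\mathbf{Q}^{\bullet}:=\mathrm{Tot}\big(\mathbf{F}(P^{\bullet})\big)$ is a degreewise well defined, doubly unbounded complex of projective $\mathcal{B}$-modules, and the core of the proof is to show it is \emph{totally acyclic}. Acyclicity of $\mathbf{Q}^{\bullet}$ should come from acyclicity of $P^{\bullet}$ together with the fact that $\mathbf{F}$ models the triangle functor $F$ (a filtration argument on the bicomplex, using that $\mathbf{F}$ sends each short exact sequence $0\to Z^{i}\to P^{i}\to Z^{i+1}\to 0$ to a distinguished triangle). For the $\mathrm{Hom}$-acyclicity, fix $Q\in\mathrm{P}(\mathcal{B})$; the adjunction $F\dashv G$ produces a quasi-isomorphism $\mathrm{Hom}^{\bullet}_{\mathcal{B}}(\mathbf{Q}^{\bullet},Q)\simeq\mathrm{Hom}^{\bullet}_{\mathcal{A}}(P^{\bullet},G(Q))$, and the hypothesis $G(Q)\in\mathcal{K}^{b}(\mathrm{P}(\mathcal{A}))$ together with the total acyclicity of $P^{\bullet}$ forces the latter to be acyclic (dévissage along the stupid filtration of the finite complex $G(Q)$ reduces this to acyclicity of $\mathrm{Hom}^{\bullet}_{\mathcal{A}}(P^{\bullet},R)$ for $R$ projective). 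Hence $N:=Z^{0}(\mathbf{Q}^{\bullet})$ is a Gorenstein projective $\mathcal{B}$-module. It remains to identify $F(M)$ with $N$ in the singularity category: comparing a finite window $\mathrm{Tot}\big(\mathbf{F}(\sigma^{[1,m]}P^{\bullet})\big)$ of $\mathbf{Q}^{\bullet}$ — which is perfect — with the matching truncation of $P^{\bullet}$, and using condition~(1) of nonnegativity ($F$ of a module has zero homology in negative degrees), yields $F(M)\cong N$. Functoriality of $M\mapsto N$ up to natural isomorphism is inherited from the induced functor on the quotients.

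The hard part will be the total acyclicity of $\mathbf{Q}^{\bullet}$. Even its plain acyclicity needs an argument, since $\mathbf{Q}^{\bullet}$ is unbounded in both directions and the sequences $0\to Z^{i}\to P^{i}\to Z^{i+1}\to 0$ are not split; and the acyclicity of $\mathrm{Hom}^{\bullet}_{\mathcal{B}}(\mathbf{Q}^{\bullet},\mathrm{P}(\mathcal{B}))$ is precisely the place where both hypotheses are consumed — nonnegativity to make $\mathbf{Q}^{\bullet}$ well defined and to convert the computation into a $G$-dual one, and $G(\mathrm{P}(\mathcal{B}))\subseteq\mathcal{K}^{b}(\mathrm{P}(\mathcal{A}))$ to reduce that to the one-sided total acyclicity of $P^{\bullet}$ that is given.
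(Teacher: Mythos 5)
The paper itself does not prove this lemma; it is quoted from Hu--Pan \cite[Theorem 5.3]{Hu2017}, so your attempt has to be measured against their argument. Your framing of the problem is sound and agrees with theirs in spirit: condition (2) of nonnegativity does give $F(\mathcal{K}^{b}(\mathrm{P}(\mathcal{A})))\subseteq \mathcal{K}^{b}(\mathrm{P}(\mathcal{B}))$, hence a descended functor on the Verdier quotients, and since the Buchweitz--Happel embeddings are full triangle embeddings, everything reduces to the object-level claim that for $M\in \mathrm{GP}(\mathcal{A})$ the image of $F(M)$ in $\mathcal{D}^{b}(\mathcal{B})/\mathcal{K}^{b}(\mathrm{P}(\mathcal{B}))$ is isomorphic to the image of some Gorenstein projective object. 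Your use of the hypothesis $G(\mathrm{P}(\mathcal{B}))\subseteq \mathcal{K}^{b}(\mathrm{P}(\mathcal{A}))$ -- adjunction plus d\'evissage to kill $\mathrm{Hom}$'s against projectives -- is also exactly where that hypothesis is consumed in the real proof.

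The genuine gap is the construction of the target object. You want to apply a chain-level model $\mathbf{F}$ of $F$ termwise to the complete resolution $P^{\bullet}$ and totalize, but $F$ is only a triangle functor between derived categories: restricted to $\mathrm{P}(\mathcal{A})$ it is an additive functor into the \emph{homotopy} category $\mathcal{K}^{b}(\mathrm{P}(\mathcal{B}))$, so the $F(d^{i})$ are only homotopy classes, and any choice of chain-map representatives satisfies $\mathbf{F}(d^{i+1})\circ\mathbf{F}(d^{i})\simeq 0$ merely up to homotopy. Hence $\mathbf{F}(P^{\bullet})$ is not a bicomplex and $\mathbf{Q}^{\bullet}=\mathrm{Tot}(\mathbf{F}(P^{\bullet}))$ is not defined; for a complex unbounded in both directions this cannot in general be rectified without a DG or $A_{\infty}$ enhancement of $F$, which is not among the hypotheses. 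Everything downstream (degreewise finiteness of the totalization, its acyclicity via a filtration of an unbounded bicomplex, the chain-level form of the adjunction $\mathrm{Hom}^{\bullet}_{\mathcal{B}}(\mathbf{Q}^{\bullet},Q)\simeq \mathrm{Hom}^{\bullet}_{\mathcal{A}}(P^{\bullet},G(Q))$) presupposes this nonexistent object, and your uniform bound on the amplitude of the $\mathbf{F}(P^{i})$ is argued only for finite dimensional algebras, whereas the lemma concerns arbitrary abelian categories with enough projectives. Hu--Pan avoid all of this by never totalizing: nonnegativity lets them replace $F(X)$, for one object $X$ at a time, by a bounded complex in degrees $\geq 0$ with projective terms in positive degrees, whose degree-zero term defines the stable functor $\overline{F}(X)$; Gorenstein projectivity of $\overline{F}(M)$ is then extracted from the compatibility of $\overline{F}$ with syzygies and cosyzygies along the complete resolution, together with the $\mathrm{Ext}$-vanishing you correctly identified. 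To repair your write-up you would need either to adopt that object-by-object construction or to add an enhancement hypothesis justifying the bicomplex.
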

    It is obvious that the lemma still holds if $F:\mathcal{D}^{b}(\mathcal{A})\to \mathcal{D}^{b}(\mathcal{B})$ is nonnegative up to shifts.

    We start to prove the Theorem (\textrm{II}).

    \begin{thm}\label{thm2}
        Let $A,B,$ and $C$ be finite dimensional algebras, and  $\mathcal{D}(\mathrm{Mod}A)$ admits a $n$-recollement relative to  $\mathcal{D}(\mathrm{Mod} B)$ and $\mathcal{D}(\mathrm{Mod} C)$.
        \begin{enumerate}
          \item $n=5$: if $A$ is virtually Gorenstein then so are $B$ and $C$;
          \item $n\geq 6$: $A$ is virtually Gorenstein if and only if so are $B$ and $C$.
        \end{enumerate}
    \end{thm}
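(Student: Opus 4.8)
The plan is to reduce Theorem \ref{thm2} to Theorem \ref{thm1}: I will show that an $n$-recollement of the unbounded derived module categories induces an $(n-4)$-recollement of the stable categories of Gorenstein-projective modules with $A$ in the middle which restricts to $\underline{\mathrm{Gp}}$. Granting this, the case $n=5$ yields a $1$-recollement and the case $n\ge 6$ an $(n-4)$-recollement with $n-4\ge 2$, so both statements follow immediately from Theorem \ref{thm1} (its conclusion being symmetric in $B$ and $C$, the order of the two outer categories is irrelevant).

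The first step is to get from $\mathcal{D}(\mathrm{Mod})$ down to the stable level. Index the $n+2$ rows of the given ladder by $1,\dots,n+2$ and let $f_1,f_2,\dots$ denote the consecutive triangle functors connecting $\mathcal{D}(\mathrm{Mod}A)$ with $\mathcal{D}(\mathrm{Mod}C)$ (and, in parallel, with $\mathcal{D}(\mathrm{Mod}B)$), so that $(f_k,f_{k+1})$ is always an adjoint pair. A direct count of adjoints via Lemma \ref{P2.3} shows that the functor in row $k$ preserves each of $\mathcal{K}^b(\mathrm{proj})$, $\mathcal{K}^b(\mathrm{Proj})$, $\mathcal{D}^b(\mathrm{mod})$ and $\mathcal{D}^b(\mathrm{Mod})$ as soon as the rows $k-1,k,k+1,k+2,k+3$ all belong to the ladder; by Lemma \ref{l3.3} and Remark \ref{l4.3} such a functor is then nonnegative up to shifts on both $\mathcal{D}^b(\mathrm{mod})$ and $\mathcal{D}^b(\mathrm{Mod})$. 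For every such row I apply Lemma \ref{hp5.3}, in its $\mathrm{Mod}$- and $\mathrm{mod}$-versions, to the nonnegative functor $f_k$ together with its right adjoint $f_{k+1}$ (whose hypothesis that $f_{k+1}$ sends projectives into $\mathcal{K}^b(\mathrm{P})$ is again part of the adjoint count), obtaining an induced triangle functor $\overline{f_k}$ on $\underline{\mathrm{GP}}$ and on $\underline{\mathrm{Gp}}$, compatible with $f_k$ through the embeddings $\underline{\mathrm{GP}}(-)\hookrightarrow\mathcal{D}^b_{sg}(-)$.

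Next I assemble these into a recollement. Row $k$ descends exactly for $2\le k\le n-1$, so all six functors of the recollement formed by rows $k,k+1,k+2$ descend precisely when $2\le k\le n-3$; this already forces $n\ge 5$, and for such $n$ we get $n-4$ consecutive recollements, on rows $2,3,\dots,n-1$. That the descended functors genuinely form a recollement of $\underline{\mathrm{GP}}(A)$ relative to $\underline{\mathrm{GP}}(C)$ and $\underline{\mathrm{GP}}(B)$ is inherited from the derived picture: on the rows in question the $n$-recollement of $\mathcal{D}(\mathrm{Mod})$ restricts to a recollement of $\mathcal{D}^b(\mathrm{mod})$ (resp. $\mathcal{D}^b(\mathrm{Mod})$), which passes to a recollement of singularity categories by Lemma \ref{LiuP2.5} with $\mathcal{T}=\mathcal{K}^b(\mathrm{proj})$ --- its hypotheses $i_*i^*\mathcal{T}\subseteq\mathcal{T}$ and $j_*j^*\mathcal{T}\subseteq\mathcal{T}$ being yet another consequence of the adjoint count; finally, the commutative squares of Lemma \ref{hp5.3} show that each functor of this singularity-category recollement preserves the full triangulated subcategory $\underline{\mathrm{GP}}$, so that the adjunctions, the full embeddings, the vanishing $j^!i_\ast=0$ and the two defining triangles all restrict to $\underline{\mathrm{GP}}$; the $\mathrm{mod}$-version of the same argument makes this $(n-4)$-recollement restrict further to $\underline{\mathrm{Gp}}$. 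Theorem \ref{thm1} then applies and gives both cases.

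The main obstacle is precisely this bookkeeping: identifying which functors in an $n$-recollement of $\mathcal{D}(\mathrm{Mod})$ have enough adjoints on the relevant side for Lemmas \ref{P2.3}, \ref{l3.3} and \ref{hp5.3} to apply, and then checking that it is the adjunctions, and not merely the functors, that descend to $\underline{\mathrm{GP}}$. A further point demanding care is running the $\mathrm{Mod}$- and $\mathrm{mod}$-versions side by side, since Theorem \ref{thm1} needs both the $(n-4)$-recollement of $\underline{\mathrm{GP}}$ --- to invoke the compact-generation criterion of Lemma \ref{B05T8.2} --- and its restriction to $\underline{\mathrm{Gp}}$.
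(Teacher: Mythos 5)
Your proposal is correct and follows essentially the same route as the paper: restrict the ladder to $\mathcal{D}^{b}(\mathrm{Mod})$/$\mathcal{D}^{b}(\mathrm{mod})$ and $\mathcal{K}^{b}(\mathrm{Proj})$/$\mathcal{K}^{b}(\mathrm{proj})$ via Lemma \ref{P2.3}, pass to singularity categories by Lemma \ref{LiuP2.5}, descend to $\underline{\mathrm{GP}}$ and $\underline{\mathrm{Gp}}$ by Lemmas \ref{l3.3}, \ref{l4.3} and \ref{hp5.3}, and invoke Theorem \ref{thm1}. The only difference is presentational: you carry out the row bookkeeping uniformly in $n$ to produce an $(n-4)$-recollement, whereas the paper treats $n=5$ explicitly (obtaining a $1$-recollement on rows $2$--$4$) and extracts only a $2$-recollement from rows $2$--$5$ when $n\geq 6$.
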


    \begin{proof}
    (1) Let
        \begin{equation}\label{T1}
        \xymatrix@C=3em{
            \mathcal{D}({\rm{Mod}}B)\ar@<2ex>[r]  \ar@{->}[r] \ar@<-2ex>[r]  & \mathcal{D}({\rm{Mod}}A) \ar@<-3ex>[l]  \ar@<-1ex>[l] \ar@<1ex>[l] \ar@<3ex>[l] \ar@<2ex>[r] \ar@{->}[r] \ar@<-2ex>[r]  & \mathcal{D}({\rm{Mod}}C) \ar@<-3ex>[l]  \ar@<-1ex>[l] \ar@<1ex>[l] \ar@<3ex>[l] \\
            }
        \end{equation}
      be a 5-recollement. By Lemma \ref{P2.3}, functors in the middle five rows of diagram \ref{T1} restrict to $\mathcal{D}^{b}(\rm{Mod})$, thus we have the following 3-recollement.
        \begin{equation}\label{T2}
         \xymatrix@C=3em{
                \mathcal{D}^{b}({\rm{Mod}}C)\ar@<1ex>[r]  \ar@<-1ex>[r] & \mathcal{D}^{b}({\rm{Mod}}A) \ar@<-2ex>[l]  \ar@{->}[l]  \ar@<2ex>[l] \ar@<1ex>[r] \ar@<-1ex>[r] & \mathcal{D}^{b}({\rm{Mod}}B) \ar@<-2ex>[l]  \ar@{->}[l]  \ar@<2ex>[l] \\
         }
        \end{equation}
      Also by Lemma \ref{P2.3}, functors in upper five rows of diagram \ref{T1} restrict to $\mathcal{K}^{b}(\rm{proj})$, then we have the the following 3-recollement.
        \begin{equation}\label{T3}
            \xymatrix@C=3em{
               \mathcal{K}^{b}({\rm{proj}}B)\ar@<1ex>[r]  \ar@<-1ex>[r] & \mathcal{K}^{b}({\rm{proj}}A) \ar@<-2ex>[l]  \ar@{->}[l]  \ar@<2ex>[l] \ar@<1ex>[r] \ar@<-1ex>[r] & \mathcal{K}^{b}({\rm{proj}}C) \ar@<-2ex>[l]  \ar@{->}[l]  \ar@<2ex>[l] \\
         }
        \end{equation}

      By Lemma \ref{l3.3}, functors in the upper four rows of diagram \ref{T2} are nonnegative up to shifts.

      By Lemma \ref{P2.3}, functors in the upper five rows of diagram \ref{T2} restrict to  $\mathcal{K}^{b}(\rm{Proj})$, then we have the following 2-recollement.
            \begin{equation}\label{T4}
            \xymatrix@C=3em{
                    \mathcal{K}^{b}({\rm{Proj}}C)\ar@<1ex>[r]  \ar@<-1ex>[r]   & \mathcal{K}^{b}({\rm{Proj}}A) \ar@<-2ex>[l]  \ar@{->}[l]   \ar@<1ex>[r] \ar@<-1ex>[r]  & \mathcal{K}^{b}({\rm{Proj}}B) \ar@<-2ex>[l]  \ar@{->}[l]   \\
            }
            \end{equation}

      By diagram \ref{T2}, \ref{T4} and Lemma \ref{LiuP2.5}, we have the following 2-recollement

      \begin{equation}\label{T5}
            \xymatrix@C=3em{
                    \mathcal{D}_{sg}^{b}({\rm{Mod}}C)\ar@<1ex>[r]  \ar@<-1ex>[r]   & \mathcal{D}^{b}_{sg}({\rm{Mod}}A) \ar@<-2ex>[l]  \ar@{->}[l]   \ar@<1ex>[r] \ar@<-1ex>[r]  & \mathcal{D}^{b}_{sg}({\rm{Mod}}C) \ar@<-2ex>[l]  \ar@{->}[l]   \\
            }
      \end{equation}

      By Lemma \ref{hp5.3}, we get the following recollement in which all functors are restrictions of functors in the upper three rows of diagram \ref{T5}.

             \begin{equation}\label{T6}
                \xymatrix@C=3em{
                  \underline{\rm{GP}}(C) \ar@{->}[r]   & \underline{\rm{GP}}(A) \ar@<-1ex>[l]  \ar@<1ex>[l]  \ar@{->}[r]   & \underline{\rm{GP}}(B) \ar@<-1ex>[l]  \ar@<1ex>[l]   \\
                }
            \end{equation}

      The functors in the upper two rows of diagram \ref{T6} preserve compact objects, then we have the following diagram.

        \begin{equation}\label{T7}
            \xymatrix@C=2em{
            (\underline{\rm{GP}}(C))^{c}   & (\underline{\rm{GP}}(A))^{c}  \ar@{->}[l]   & (\underline{\rm{GP}}(B))^{c} \ar@{->}[l]  \\
        }
        \end{equation}

      Analogously, functors in the middle five rows of diagram \ref{T1} restrict to $\mathcal{D}^{b}(\rm{mod})$ and functors in the upper five rows of diagram \ref{T1} restrict to $\mathcal{K}^{b}({\rm{proj}})$. By Remark \ref{l4.3}, restrictions of functors in the second to fourth rows of diagram \ref{T1} on $\mathcal{D}^{b}(\rm{mod})$ are nonnegative up to shifts. Again, by Lemma \ref{hp5.3}, we have the following recollement.

        \begin{equation}
         \xymatrix@C=3em{
                  \underline{\rm{Gp}}(C) \ar@{->}[r]    & \underline{\rm{Gp}}(A) \ar@<-1ex>[l]  \ar@<1ex>[l]  \ar@{->}[r]  & \underline{\rm{Gp}}(B) \ar@<-1ex>[l]  \ar@<1ex>[l]   \\
            }
         \end{equation}

      Above all, we have the following commutative diagram in which all vertical arrows are embeddings and all horizontal arrows are induced by functors in the second to fourth rows from up to bottom of diagram \ref{T1}.

        \begin{equation*}
            \xymatrix@C=3em{
                \underline{\rm{GP}}(C) \ar@{->}[r]|-{i_{\ast}=i_{!}}              & \underline{\rm{GP}}(A) \ar@<-1ex>[l]_{i^{\ast}} \ar@<1ex>[l]^{i^{!}}  \ar@{->}[r]|-{j^{!}=j^{\ast}}            & \underline{\rm{GP}}(B) \ar@<-1ex>[l]_{j_{!}}  \ar@<1ex>[l]^{j_{\ast}}   \\
                (\underline{\rm{GP}}(C))^{c}  \ar@{->}[u]           & (\underline{\rm{GP}}(A))^{c}  \ar@{->}[l]_{i^{\ast}} \ar@{->}[u]     & (\underline{\rm{GP}}(B))^{c} \ar@{->}[l]_{j_{!}}  \ar@{->}[u]              \\
                \underline{\rm{Gp}}(C) \ar@{->}[r]|-{i_{\ast}=i_{!}}  \ar@{->}[u] & \underline{\rm{Gp}}(A) \ar@<-1ex>[l]_{i^{\ast}} \ar@<1ex>[l]^{i^{!}}  \ar@{->}[r]|-{j^{!}=j^{\ast}} \ar@{->}[u]      & \underline{\rm{Gp}}(B) \ar@<-1ex>[l]_{j_{!}}  \ar@<1ex>[l]^{j_{\ast}} \ar@{->}[u]  \\
        }
        \end{equation*}

    By Theorem \ref{thm1}, the claim is proven.

    (2) Given a 6-recollement
     \begin{equation}\label{D1}
     \xymatrix@C=3em{
       \mathcal{D}({\rm{Mod}}B)\ar@<2ex>[r]  \ar@{->}[r] \ar@<-2ex>[r]  \ar@<-4ex>[r] & \mathcal{D}({\rm{Mod}}A) \ar@<-3ex>[l]  \ar@<-1ex>[l] \ar@<1ex>[l] \ar@<3ex>[l] \ar@<2ex>[r] \ar@{->}[r] \ar@<-2ex>[r] \ar@<-4ex>[r]  & \mathcal{D}({\rm{Mod}}C) \ar@<-3ex>[l]  \ar@<-1ex>[l] \ar@<1ex>[l] \ar@<3ex>[l] \\
         }
     \end{equation}
     with eight rows. By a similar argument as in (1), we have the following commutative diagram in which all vertical arrows are embeddings and all horizontal arrows are induced by functors in the second to fifth rows from up to bottom of diagram \ref{D1}.
    \begin{equation*}\label{D8}
      \xymatrix@C=3em{
            \underline{\rm{GP}}(C) \ar@<1ex>[r]|-{i_{\ast}=i_{!}} \ar@<-1ex>[r]              & \underline{\rm{GP}}(A) \ar@<-2ex>[l]_{i^{\ast}} \ar@{->}[l]|-{i^{!}}  \ar@<1ex>[r]|-{j^{!}=j^{\ast}} \ar@<-1ex>[r]             & \underline{\rm{GP}}(B) \ar@<-2ex>[l]_{j_{!}}  \ar@{->}[l]|-{j_{\ast}}   \\
            (\underline{\rm{GP}}(C))^{c} \ar@<-1ex>[r]_{i_{\ast}} \ar@{->}[u]           & (\underline{\rm{GP}}(A))^{c}  \ar@<-1ex>[l]_{i^{\ast}} \ar@<-1ex>[r]_{j^{!}} \ar@{->}[u]                     & (\underline{\rm{GP}}(B))^{c} \ar@<-1ex>[l]_{j_{!}}  \ar@{->}[u]              \\
             \underline{\rm{Gp}}(C) \ar@<1ex>[r]|-{i_{\ast}=i_{!}} \ar@<-1ex>[r] \ar@{->}[u] & \underline{\rm{Gp}}(A) \ar@<-2ex>[l]_{i^{\ast}} \ar@{->}[l]|-{i^{!}}  \ar@<1ex>[r]|-{j^{!}=j^{\ast}} \ar@<-1ex>[r] \ar@{->}[u]      & \underline{\rm{Gp}}(B) \ar@<-2ex>[l]_{j_{!}}  \ar@{->}[l]|-{j_{\ast}} \ar@{->}[u]  \\
      }
    \end{equation*}
    By Theorem \ref{thm1}, the claim is proven.
    \end{proof}

    As an application of Theorem \ref{thm2}, we reprove that virtually Gorensteinness is invariant under derived equivalence.
    \begin{cor}[{\cite[Theorem 8.11]{Beligiannis2005}}]
      Let $A$ and $B$ be two finite dimensional algebras and $F:\mathcal{D}({\rm{Mod}}A) \to \mathcal{D}({\rm{Mod}}B)$ be a triangle functor. If $F$ is an equivalence, then $A$ is virtually Gorenstein if and only if $B$ is virtually Gorenstein.
    \end{cor}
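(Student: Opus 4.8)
The plan is to package the derived equivalence as a trivial $n$-recollement of arbitrarily large height and then invoke Theorem~\ref{thm2}. Let $G\colon\mathcal{D}(\mathrm{Mod}B)\to\mathcal{D}(\mathrm{Mod}A)$ be a quasi-inverse of $F$. Since $F$ is a triangle equivalence, $G$ is again a triangle functor, and from $GF\cong\mathrm{id}$ and $FG\cong\mathrm{id}$ one reads off $\Hom(FX,Y)\cong\Hom(X,GY)$ and $\Hom(GY,X)\cong\Hom(Y,FX)$; thus $G$ is simultaneously a left and a right adjoint of $F$. In particular every iterated adjoint one might need to build a ladder out of $F$ exists and is again one of $F$, $G$.

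First I would record the recollement of $\mathcal{D}(\mathrm{Mod}A)$ relative to $\mathcal{D}(\mathrm{Mod}B)$ and the zero category $0=\mathcal{D}(\mathrm{Mod}\,0)$, with left-hand functors $i^{*}=i^{!}=F$ and $i_{*}=i_{!}=G$, and with all right-hand functors $j_{!}=j^{!}=j_{*}$ equal to the zero functor. The adjunctions $i^{*}\dashv i_{*}\dashv i^{!}$ hold by the previous paragraph, $i_{*}=G$ is a full embedding because it is an equivalence, the relation $j^{!}i_{*}=0$ is automatic, and for $X\in\mathcal{D}(\mathrm{Mod}A)$ the two defining triangles reduce --- via $GFX\cong X$ and the vanishing of the $j$-terms --- to the split distinguished triangles on $\mathrm{id}_{X}$; so this is indeed a recollement. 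Extending the left-hand column by the alternating adjoint chain $\cdots,F,G,F,G,\cdots$ and the right-hand column by zero functors, the same checks show that every window of three consecutive rows is a recollement (one of the two outer categories being $0$, their roles may interchange from one window to the next, which does no harm). Hence $\mathcal{D}(\mathrm{Mod}A)$ admits an $n$-recollement relative to $\mathcal{D}(\mathrm{Mod}B)$ and $0$ for every $n\geq1$; I would take $n=6$.

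Finally I would feed this into Theorem~\ref{thm2}(2). The zero algebra is finite dimensional and trivially virtually Gorenstein, since $\underline{\mathrm{GP}}(0)=0$ forces $(\underline{\mathrm{GP}}(0))^{c}=0=\underline{\mathrm{Gp}}(0)$ and Lemma~\ref{B05T8.2} applies; alternatively one simply observes that the proof of Theorem~\ref{thm2} goes through verbatim with $\mathcal{D}(\mathrm{Mod}C)$ replaced by $0$, the condition on the $C$-side being vacuous. Either way, Theorem~\ref{thm2}(2) yields that $A$ is virtually Gorenstein if and only if both $B$ and $0$ are, that is, if and only if $B$ is virtually Gorenstein.

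The only point requiring care is the verification in the second paragraph that the zero third term, together with the $F,G$-column, satisfies the recollement axioms in every window of three rows --- a routine check, not a genuine obstacle; all the substance of the statement has been isolated in Theorem~\ref{thm2}, which the construction above is engineered to feed.
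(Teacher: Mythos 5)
Your proposal is correct and is essentially the paper's own argument: the paper likewise packages the equivalence as an $n$-recollement with third term $0$ (it places $\mathcal{D}(\mathrm{Mod}B)$ in the middle rather than $\mathcal{D}(\mathrm{Mod}A)$, which is immaterial) and then applies Theorem~\ref{thm2}. The only difference is that you spell out the routine verifications (the quasi-inverse being a two-sided adjoint, the recollement axioms in each window, and the virtual Gorensteinness of the zero algebra) that the paper leaves implicit.
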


    \begin{proof}
       Since $F$ is an equivalence, there is a $n$-recollement of $\mathcal{D}({\rm{Mod}}B)$ relative to $\mathcal{D}({\rm{Mod}}A)$ and $0$ for $n>0$. Then by Theorem \ref{thm2}, $B$ is virtually Gorenstein if and only if $A$ is virtually Gorenstein.
    \end{proof}

\section*{Acknowledgements}
The authors thank Professor Wei Hu for providing a key reference and Yongyun Qin for helpful discussion. This work is supported by the National Natural Science Foundation of China (No.s 11901428 and 11801141).

\bibliographystyle{alpha}

\end{document}